\documentclass{amsart}
\usepackage{a4wide,amsmath,amssymb}
\usepackage{url}
\usepackage{geometry}

 \textwidth 18cm
\oddsidemargin -30pt
\evensidemargin -20pt
 \textheight21.5cm

\numberwithin{equation}{section}
\usepackage{mathtools}
\usepackage{hyperref}

\usepackage{xcolor}

\usepackage{xcolor}
\usepackage{cases}

\newtheorem{thm}{Theorem}[section]
\newtheorem{lemma}[thm]{Lemma}
\newtheorem{prop}[thm]{Proposition}
\newtheorem{cor}[thm]{Corollary}
\theoremstyle{remark}
\newtheorem{rem}[thm]{Remark}
\theoremstyle{definition}

\newcommand{\R}{\mathbb{R}}

\newcommand\intO{\iint_{Q_T}}

\makeatletter
\@namedef{subjclassname@2020}{\textup{2020} Mathematics Subject Classification}
\makeatother

\newcommand{\aac}{\`a}

\newcommand{\dys}{\displaystyle}

\newcommand{\eps}{\varepsilon}

\DeclareMathOperator*{\esssup}{ess\,sup}

\title[]{On maximal regularity estimates for quasilinear evolution equations via the integral Bernstein method}
\date{\today}

\author{Alessandro Goffi}
\address{Dipartimento di Matematica ``Tullio Levi-Civita'', Universit\`a degli Studi di Padova, 
via Trieste 63, 35121 Padova (Italy)}
\curraddr{}
\email{alessandro.goffi@unipd.it}
\thanks{}
\author{Tommaso Leonori}
\address{Dipartimento di Scienze di Base ed Applicate per l'Ingegneria, Sapienza Universit{\aac} di Roma, Via Antonio Scarpa 10, 00161 Roma (Italy)}
\curraddr{}
\email{tommaso.leonori@uniroma1.it}
\thanks{}
\subjclass[2020]{35B65, 35K59, 35K92.}
\keywords{Bernstein method, Bochner identity, Gradient bounds, Maximal regularity, Neumann boundary condition, parabolic $p$-Laplacian, Riccati equation, Hamilton-Jacobi equation}
\thanks{The authors are members of the Gruppo Nazionale per l'Analisi Matematica, la Probabilit\`a e le loro Applicazioni (GNAMPA) of the Istituto Nazionale di Alta Matematica (INdAM) and were partially supported by the INdAM-GNAMPA Project 2023 ``Problemi variazionali/nonvariazionali: interazione tra metodi integrali e principi del massimo''. A. Goffi was partially supported by the King Abdullah University of Science and Technology (KAUST) project CRG2021-4674 ``Mean-Field Games: models, theory and computational aspects" and by the project funded by the EuropeanUnion – NextGenerationEU under the National Recovery and Resilience Plan (NRRP), Mission 4 Component 2 Investment 1.1 - Call PRIN 2022 No. 104 of February 2, 2022 of Italian Ministry of University and Research; Project 2022W58BJ5 (subject area: PE - Physical Sciences and Engineering) ``PDEs and optimal control methods in mean field games, population dynamics and multi-agent models". This research was partially carried out while A. Goffi was Postdoctoral  research fellow at Dipartimento di Scienze di Base ed Applicate per l'Ingegneria, Sapienza Universit{\aac} di Roma.}

\begin{document}

\maketitle

\begin{abstract} 
This work addresses the problem of (global) maximal regularity for quasilinear evolution equations with sublinear gradient growth and right-hand side in Lebesgue spaces, complemented with Neumann boundary conditions. The proof relies on a suitable variation of the Bernstein technique and the Bochner identity, and provides new results even for the simpler parabolic $p$-Laplacian equation with unbounded source term. As a byproduct we also obtain a second-order estimate that can be of independent interest when the right-side of the equation belongs to $L^m$, $m\neq 2$. This approach leads to new results even for stationary problems.
\end{abstract}

\noindent

\section{Introduction}

We investigate in this paper gradient regularity properties of solutions to the   Neumann boundary-value problem of the type 
\begin{equation}\label{pp}
\begin{cases}
\partial_t u-\mathrm{div}(|Du|^{p-2}Du) =H(x,t,Du) &\quad\text{ on }Q_T,\\
\partial_\nu u=0&\quad\text{ on } \partial\Omega\times(0,T),\\
u(x,0)=u_0(x)&\quad\text{ on } \Omega\ .
\end{cases}
\end{equation}
Here, $p>1$, $\Omega$ is a bounded convex set in $\R^N$ of class $C^1$, $N\geq1$ being the dimension of the ambient space, and $T>0$ is the time horizon of the problem, $\partial_\nu$ is the outward normal derivative, and  $Q_T:=\Omega\times(0,T)$. \\
The nonlinearity $H(x,t,\xi):Q_T\times\R^N\to\R$ and $u_0:\R^N\to\R$ are given, while  $u:Q_T\to\R$ is the unknown. The function $H$ is often called the Hamiltonian and will be of the form
\begin{equation}\label{modelH}
H(x,t,\xi)=|\xi|^\gamma+f(x,t),
\end{equation}
where
\begin{equation}\label{growth}
0\leq \gamma   <  \ell \qquad \mbox{ where } \quad \ell := \max\left\{\frac{p}2,p-1-\frac{p-2}{N+2} \right\}
\end{equation}
and
\[
f\in L^m(Q_T)\qquad \text{ or } \qquad f\in L^m(0,T;W^{1,m}(\Omega)),\ m>2\,, 
\]
and 
$u_0 \in W^{1,r}(\Omega)$ for some $r \geq 1$, 
depending on the type of result we are interested in. The papers \cite{Guibe,Magliocca, PorzioDCDS} explain in detail the motivation of identifying $\ell$,  defined in \eqref{growth},  as the parabolic counterpart of the linear growth exponent $\gamma=p-1$ appearing in the stationary framework. 

Since $\gamma$ is allowed to be chosen $0$, we   include some new results, and recover some other known estimates by different proofs as a byproduct, for the more classical time-dependent $p$-Laplacian equation, even if we  consider slightly more general diffusions (see Section \ref{sec;assres} for more details on the assumptions).\\
The first result shows that $W^{1,m}$ regularity is preserved along the (nonlinear) flow driven by the operator $\partial_t-\Delta_p$. More precisely, we show for  $\gamma=0$ that 
\[
\|Du\|_{L^\infty_t(L^m_x)}\leq C(m,T)(\|Df\|_{L^m_{x,t}}+\|Du_0\|_{L^m}),\quad m\in(2,\infty).
\]
Notably, $C(m,T)\to \overline{C}$ as $m\to\infty$, thus showing the preservation of Lipschitz regularity by different methods than those arising from the theory of viscosity solutions \cite{BDL}. Furthermore, the estimate is stable in the limit $p\to1$: this provides a parabolic counterpart of an estimate found in \cite{PorrCCM} for the correspondent  class of elliptic problems. The precise statement is in Theorem \ref{conserv}.

\medskip 

The second main result, see Theorems \ref{mainintro1} and \ref{mainintro2}, treats functions $H$ of the form \eqref{modelH}--\eqref{growth}. The model results we prove are the following: 
\begin{equation}\label{main1}
\|Du\|_{L^q(Q_T)}\leq C_1+C_2\|f\|_{L^m(Q_T)}^{\frac{1}{p-1-\frac{p-2}{N+2}}},
\end{equation}
where
\[
\max\left\{ 2, \frac{Np+4}{N(p-1)+2}\right\}<m<N+2\ , \quad   \quad q=\frac{(N+2)(p-1)-(p-2)}{N+2-m}m,
\]
and
\begin{equation}\label{main2}
\|Du\|_{L^r(Q_T)}\leq C_1+C_2\|f\|_{L^{N+2}(Q_T)}^{\frac{1}{p-1-\frac{p-2}{N+2}}}\ , \qquad r<\infty .
\end{equation}
Note that 
\[
q\to \frac{(N+2)m}{N+2-m}\quad \text{ as } \quad p\to 2,
\]
which is the classical parabolic Sobolev exponent corresponding to the anisotropic space $W^{2,1}_m$ associated to the heat operator. Besides, we point out that the above estimates are stable as $\gamma\to0$. The first estimate \eqref{main1} extends previous qualitative bounds found in Theorem 1.9 of \cite{BDGOjfa} to a wider regime of summability (see Remark \ref{comparison}). In quantitative terms, both estimates can be seen as the parabolic counterpart of those obtained in Theorem 4.3-(i) and (ii) of \cite{CianchiMazyaJEMS} for stationary equations without lower order perturbations. Furthermore, as a byproduct our results provide the second-order estimate
\[
  \||Du|^\omega D u\|_{L^2(0,T;W^{1,2}(\Omega))}\leq C_1+C_2\|f_\eps \|_{L^{m}(Q_T)}^\frac{Nm}{N+2-m},\quad \mbox{ with } \quad  \omega =\frac{Nm (p-1)-(p-2) (m-2)}{2 (N+2-m)}-1 \ .
\]
Such a bound agrees (at least formally) in the limit $m\to2$ with the level of regularity found in \cite{CianchiMazyaJGeom} for the case $m=2$ in the context of Dirichlet boundary conditions (see Theorem 2.2 therein). To our knowledge, this is the first instance of a second order estimate when $f\in L^m$, $m\neq 2$, in the parabolic setting. We mention that our approach provides new results even in the stationary framework, see Theorem \ref{mainapp}, where some contributions are already available \cite{Lou,Mercuri,Montoro,Mosconi}, and complement those in \cite{CGL} obtained in the superlinear regime.

 We conclude the treatment by raising the integrability of the gradient up to $L^\infty$ when $m>N+2$ without using nonlinear Calder\'on-Zygmund type estimates: this is done in Theorem \ref{mainintro3} combining the Bernstein method and the Stampacchia approach for zero-th order estimates, and this appears to be new in the context of eikonal type nonlinearities. This also provides an approach to increase the summability of the gradient from $L^r$ to $L^\infty$ for Hamilton-Jacobi type equations ($p=2$) driven by diffusive operators that do not satisfy the Calder\'on-Zygmund theory. These bounds were  typically achieved by different methods that combine a variational Bernstein argument with an iteration \`a la De Giorgi-Moser \cite{Dib,DiBFriedman}, see Remark \ref{p<2}.\\

Let us mention that, despite the huge  effort devoted to analyze gradient regularity properties of solutions to nonlinear degenerate/singular parabolic equations and systems, cf. e.g. \cite{EvansAlikakos,AcerbiMingioneDuke,BaroniHabermann,BaroniLindfors,Dib,FrehseSchwarzacher} and the references therein, few results in the setting of maximal regularity are known even for the mere parabolic $p$-Laplacian. Most of the literature deals with proving pointwise gradient estimates and local gradient boundedness in the borderline regime given by the Lorentz class $L(N+2,1)$, cf. \cite{MingioneMemoirs,KuusiMingioneSNS,KuusiMingioneMatAnn,KuusiMingioneARMA}, or to zero-th order estimates \cite{PorzioOrsina,PorzioDCDS,Magliocca}. In addition, the research concerning second order regularity is still limited to few results, see e.g. \cite{Beirao,DiBFriedman,Parviainen,Dib} and the references in \cite{CianchiMazyaJGeom}, where restrictions on $p$ or higher regularity on the data are sometimes required. We refer to the recent paper \cite{ACCFM} and \cite{CianchiMazyaARMA2,CianchiMazyaJMPA,Dong,CianchiMazyaCPAA,GigaTsubouchi,MingioPalaSurvey} for further local and global regularity properties of stationary quasilinear equations.\\
 
Our technique revolves around an integral version of the Bernstein method. The main idea behind the (classical) Bernstein technique is that if some derivatives of the unknown function are subsolutions to an elliptic/parabolic equation, then the maximum principle provides a priori estimates for the derivatives of such solutions: we refer to the seminal paper by S. Bernstein \cite{Bern} (see also  \cite{Serrin2,Lions80} for more developments). The presence of integrable data in the equation, however, rules out the possibility of using sup-norm techniques \cite{CLS,LPcpde,VeronJFA,AttouchiSouplet,Attouchi}, possibly embodied within the theory of viscosity solutions \cite{BDL,CDLP}, and some variation of the method involving integral identities is needed.  The integral Bernstein method was introduced almost simultaneously by E. DiBenedetto-A. Friedman \cite{DiBFriedman} and P.-L. Lions in \cite{Lions85}, see also \cite{BardiPerthame, LasryLions}, to study quite different problems. The former analyzes gradient boundedness and finer properties of solutions to certain classes of nonlinear degenerate and singular parabolic equations and systems. The latter involves the $L^q$ integrability of the gradient of solutions to the stationary Hamilton-Jacobi equation
\[
-\Delta u+|Du|^\gamma=f(x),
\]
whose peculiarities are the superlinear character $\gamma>1$ of the first-order term and the $L^q$ integrability of $f$. The recent paper \cite{CGell} refined the method to study maximal regularity in $L^q$ spaces of solutions to the above equation, see also \cite{CCV,GoffiPediconi} for the Neumann case, \cite{Gccm} for Dirichlet problems and \cite{CV} for interior bounds. The authors together with M. Cirant developed a variation of the Bernstein technique in \cite{CGL} to analyze the problem of maximal gradient regularity for the stationary counterpart of \eqref{pp} under the ``superlinear'' growth condition $\gamma>p-1$. Gradient regularity along with comparison and uniqueness principles in the sublinear regime is the matter of e.g. \cite{BettaCPAA,DNFG}. Other recent variations of the integral Bernstein method appeared in the context of elliptic and parabolic systems with superquadratic gradient terms arising from stochastic control and differential games \cite{BensoussanFrehseCPAA,Urbano}. \\
 Gradient estimates for parabolic viscous equations with first-order nonlinearities having superlinear growth are more delicate, and have been tackled using different methods based on duality techniques: this is the matter of the papers \cite{c22par, cg20,CGpar,Gjee,Gccm}. Indeed, the Bernstein argument developed in \cite{CGell} fails in general for time-dependent problems having  superlinear behavior with respect to the gradient. Though the picture is completely understood for linear diffusions $p=2$ with unbounded source terms from the results in \cite{c22par, CGpar}, the quasilinear parabolic case seems ruled out from these works. Zero-th order estimates at the level of maximal regularity have been studied in \cite{Magliocca}, see also \cite{GMP14} for the elliptic counterpart. Few results are available regarding the gradient regularity of solutions to time-dependent quasilinear problems with right-hand side in Lebesgue spaces, see \cite{DiBFriedman,Dib} and \cite{PorzioDCDS}. Interior and global H\"older estimates for the larger class of semisolutions in the supernatural case $\gamma>p$ can be found in \cite{Gprsa}, see also \cite{CDLP,DP} for $p=2$. In this direction, we also mention those in \cite{BDL,LPcpde} for stationary equations and Lipschitz data, and the earlier paper \cite{EstebanMarcati,Laurencot} which concerned regularizing effects related to semiconcavity-type estimates. 
 
Here, to reach our maximal regularity results,  we develop a version of the integral Bernstein method suitable to quasilinear evolution equations in the sublinear regime of the nonlinearity. Remarkably, in the borderline case $\gamma=0$ this provides new results even for the parabolic $p$-Laplace equation with unbounded source terms. 
Our argument is based on a delicate interplay between a nonlinear integral $p$-Bochner identity, cf. Proposition \ref{bochner}, refined parabolic Sobolev inequalities and simpler tools such as the H\"older and Young inequalities. Even though the sublinear framework can be considered as a perturbative regime, and maximal regularity estimates along with gradient boundedness in the case $p=2$ usually follow by standard Calder\'on-Zygmund regularity, cf. \cite{CKS}, we stress that estimates in the case of nonlinear diffusions cannot be immediately deduced as a consequence of the purely diffusive case.  For this reason our analysis takes on a stronger meaning: it provides a self-contained treatment of the gradient regularity for the whole integrability range of $m$ (above the duality exponent) without using Calder\'on-Zygmund type theory.

\section{Assumptions and main results}\label{sec;assres}
We suppose in the whole paper the following set of assumptions on the diffusion: let $\alpha: \R^+ \to \R^+$ be a $C^2$ function that satisfies 
\begin{equation} 
\label{Aa}
\begin{array}{c}
\displaystyle -1<i_\alpha= \inf_{s>0}\frac{2 s\alpha'(s)}{\alpha(s)}\leq \sup_{s>0}\frac{2s\alpha'(s)}{\alpha(s)}=s_\alpha<\infty\,,
\\[2.0 ex]
\displaystyle 
\exists  \, 0< \underline{\alpha} \leq \overline{\alpha} <\infty \, : \qquad   \underline{\alpha} s^{\frac{p-2}{2}}\leq\alpha(s)\leq     \overline{\alpha}  s^{\frac{p-2}{2}}\,,\quad \forall s>0\,, p>1.
\end{array}
\end{equation}

Notice that \eqref{Aa} implies  the existence of a constant $\widetilde{c}_\alpha = 1+i_\alpha>0$ such that

\begin{equation}\label{a'a}
 2 s\alpha'(s)+\alpha(s) 
 \geq \widetilde{c}_\alpha \alpha(s)\ ,\quad \forall s>0\,.
\end{equation}

Since our results  deal with a priori estimates on solutions to \eqref{pp}, we first regularize the principal part of the operator considering  

\[
\forall \eps >0 \quad \underline{\alpha} (s+\eps)^{\frac{p-2}{2}} \leq \alpha_\eps(s) :=  \alpha (s + \eps)  \leq \overline{\alpha} (s+\eps)^{\frac{p-2}{2}}\ ,\quad \forall s>0\,.
\]
We also recall that, at least for smooth functions (say $C^2$)  $z$ we can approximate the $p$-Laplacian of $z$ as
\[
\lim_{\eps\to0} \ \alpha_\eps(|Dz|^2)\mathcal{A}_{\eps,z}(D^2z) \ = \ \alpha (|Dz|^2)\mathcal{A}_{0,z}(D^2z)  \ = \ \Delta_p z\,,
\]
where 
for every $v\in C^1$, $M\in\mathcal{S}_N$, the space of $N\times N$ symmetric matrices, we set
\[
\mathcal{A}_{\eps,v}(M)=\mathrm{Tr}(M)+    \frac{ 2   \alpha_\eps ' (|Dv|^2)}{\alpha_\eps (|Dv|^2) }  \ M  Dv  Dv.
\]
Here and in the rest of the paper, we denote by $w= |D u|^2$ so that by   the identity $Dw=\frac12D^2uDu$ we can rephrase the previous equality as 
$$
\mathcal{A}_{\eps, u} (D^2 u )
=
  \Delta u +      \frac{ 2   \alpha_\eps ' (w)}{\alpha_\eps (w) }  D^2 u Du Du   
=
  \Delta u +      \frac{     \alpha_\eps ' (w)}{\alpha_\eps (w) }  Dw  Du   \,.
$$

Notice that for all $\eps >0$,  $-\alpha_\eps(|Du|^2)\mathcal{A}_{\eps,z}(D^2u)$ defines a quasilinear operator which is smooth and uniformly elliptic. 
As usual, we consider the following approximation to \eqref{pp} with the problem
\begin{equation}\label{approxp}
\begin{cases} 
\partial_t u_\eps-\alpha_\eps(|Du_\eps|^2)\mathcal{A}_{\eps,u_\eps}(D^2u_\eps)=H_\eps (x,t, Du_\eps) \qquad & \mbox{in }  Q_T  \,,\\
\partial_\nu u_\eps= 0  & \mbox{on } \partial \Omega\times (0,T) \,,\\
u_\eps (x,0) = u_0 (x) & \mbox{in } \Omega\,,\\
\end{cases} 
\end{equation}
where $H_\eps$ is a smooth approximation of $H$. 

Our goal is to derive estimates for  $u_\eps$ that are independent from $\eps$.

\bigskip
The first result shows the preservation of Sobolev (up to Lipschitz) regularity for equations without gradient dependent nonlinearities. It also contains a kind of continuous dependence estimate with respect to the data of the problem, which is stable in the limits $p\to1$ and $m\to\infty$.

\begin{thm}\label{conserv}
Let $\Omega$ be convex and $u_\eps$ be a solution to \eqref{approxp}, with $H_\eps (x,t, Du_\eps)=f_\eps(x,t)\in L^m(0,T;W^{1,m}(\Omega))$, and  $u_0\in W^{1,m}(\Omega)$, $m>2$. Then, for every $m\in(2,\infty]$ we have the a priori estimate
\[
\esssup_{t \in [0,T]} \|Du_\eps (t)\|_{L^m(\Omega)}\leq C(m,T)(\|D f_\eps \|_{L^m(Q_T)}+\|Du_0\|_{L^m(\Omega)}),
\]
where $C(m,T)\to \overline{C}>0$ as $m\to\infty$. 
\end{thm}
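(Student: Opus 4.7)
The plan is to run the integral Bernstein method in its cleanest form, which is available here because $H_\eps = f_\eps$ contains no gradient-dependent term to absorb. Let me set $u = u_\eps$ and $w = |Du|^2$ throughout, and work with the regularized problem \eqref{approxp} so that all manipulations are justified on $C^2$ functions; the final estimate will be uniform in $\eps$, and the $m = \infty$ case will follow by passing $m \to \infty$.

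First, I would start from the Bochner-type identity of Proposition \ref{bochner} applied to $u$. Schematically, it yields a differential inequality of the form
\[
\tfrac{1}{2}\partial_t w \;-\; \dive\bigl(\alpha_\eps(w) B_\eps(Du)\, Dw\bigr) \;+\; \mathcal{Q}_\eps(D^2 u, Du) \;=\; Du\cdot Df_\eps
\]
where $B_\eps$ is an elliptic matrix built from $\alpha_\eps, \alpha_\eps'$, and $\mathcal{Q}_\eps$ collects the quadratic $|D^2 u|^2$--type contributions; the coercivity of $\mathcal{Q}_\eps$ is ensured by the structural lower bound $i_\alpha > -1$, cf.\ \eqref{a'a}. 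The next step is to multiply this identity by $w^{(m-2)/2}$ and integrate over $\Omega$. The parabolic term gives $\tfrac{1}{m}\frac{d}{dt}\int_\Omega w^{m/2}$, while integration by parts of the divergence term produces an extra nonnegative bulk contribution of the form $c(m)\int \alpha_\eps(w) w^{(m-4)/2}|Dw|^2$ together with a boundary integral
\[
\int_{\partial\Omega} \alpha_\eps(w)\, w^{(m-2)/2}\, \partial_\nu w \, dS.
\]

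Second, I would dispose of this boundary term using the convexity of $\Omega$. Since $\partial_\nu u = 0$ on $\partial\Omega$, the vector $Du$ is tangent to $\partial\Omega$ and the classical Reilly/Grisvard identity gives $\partial_\nu w = -2\, I\!I(Du,Du) \leq 0$, where $I\!I$ is the second fundamental form; convexity makes this boundary integral nonpositive, so it can be discarded from the left-hand side. Combined with the nonnegativity of $\mathcal{Q}_\eps$, this leaves only the production term on the right, which by H\"older is controlled by
\[
\int_\Omega |Du|^{m-1}\,|Df_\eps|\, dx \;\leq\; \|Df_\eps(t)\|_{L^m(\Omega)} \,\|Du(t)\|_{L^m(\Omega)}^{m-1}.
\]

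Third, putting everything together yields $\tfrac{1}{m}\frac{d}{dt}\|Du(t)\|_{L^m(\Omega)}^m \leq \|Df_\eps(t)\|_{L^m(\Omega)}\|Du(t)\|_{L^m(\Omega)}^{m-1}$, i.e.\ the clean ODE
\[
\frac{d}{dt}\|Du(t)\|_{L^m(\Omega)} \;\leq\; \|Df_\eps(t)\|_{L^m(\Omega)}.
\]
Integrating in time and applying H\"older on $(0,T)$ gives
\[
\|Du_\eps(t)\|_{L^m(\Omega)} \;\leq\; \|Du_0\|_{L^m(\Omega)} + T^{1-\frac{1}{m}}\|Df_\eps\|_{L^m(Q_T)},
\]
so $C(m,T) = \max\{1, T^{1-1/m}\}$, which converges to $\max\{1,T\}$ as $m \to \infty$; sending $m\to\infty$ recovers the Lipschitz bound via $\|\cdot\|_{L^m}\to\|\cdot\|_{L^\infty}$.

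The main obstacle I expect is the second step: carefully combining the $\mathcal{Q}_\eps$ term coming from the Bochner identity with the term produced by integrating by parts against $w^{(m-2)/2}$, so that both are seen to have compatible sign; this requires the sharp form of \eqref{a'a} and the $C^2$ regularity of $\alpha$. The boundary argument via convexity is standard but crucial, because without it one cannot close the estimate on $\|Du(t)\|_{L^m(\Omega)}$ purely in terms of $\|Df\|_{L^m}$, and no gradient term on the right needs to be absorbed into the second-order quantity — this is precisely why the simple form of the inequality is obtained in the $\gamma=0$ regime and why the bound is stable as $p\to 1$ and as $m\to\infty$.
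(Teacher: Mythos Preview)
Your proposal is correct and uses the same engine as the paper: the $p$-Bochner identity tested against a power of $w$, the convexity/Neumann sign $\partial_\nu w\le 0$ to drop boundary terms, and the coercivity coming from $i_\alpha>-1$ (the paper packages these as Lemma~\ref{intid} and Remark~\ref{principe}, culminating in inequality~\eqref{main33}). The one technical caveat is that you should test with $(\eps+w)^{(m-2)/2}$ rather than $w^{(m-2)/2}$, exactly as the paper does, to avoid the singularity of $w^{(m-4)/2}$ in the coercive term when $2<m<4$; this changes nothing in your argument.

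Where you genuinely diverge from the paper is in the endgame. The paper integrates \eqref{main33} over a short cylinder $Q_\tau$, applies H\"older and then Young to split $\int |Df_\eps|(\eps+w)^{\beta+\frac12}$ into $\|Df_\eps\|_{L^m}^m$ plus $\tau\cdot\esssup_t\int(\eps+w)^{\beta+1}$, absorbs the latter by choosing $\tau$ small, and then iterates $\lfloor T/\tau\rfloor$ times. You instead keep time frozen, apply H\"older only in $x$, and obtain the clean differential inequality $\frac{d}{dt}\|Du(t)\|_{L^m}\le \|Df_\eps(t)\|_{L^m}$, which integrates directly. Your route is more elementary, avoids the time-iteration entirely, and produces an explicit constant $C(m,T)=\max\{1,T^{1-1/m}\}$ with transparent behavior as $m\to\infty$; the paper's iteration yields a constant $C(\beta,T)$ whose stability as $\beta\to\infty$ is asserted but not made explicit.
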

The second result we want to prove treats more general quasilinear equations with right-hand side in $L^m(Q_T)$. It shows a maximal gradient regularity for integrable source terms in the regime $2<m<N+2$ and slowly increasing gradient terms, namely having a \lq\lq sublinear\rq\rq{ } growth. This complements earlier results in \cite{Magliocca} where zero-th order estimates were addressed for a similar problem equipped with Dirichlet boundary conditions, and those in \cite{BDGOjfa,PorzioDCDS} concerning maximal regularity estimates below the duality exponent of the right-hand side.
\begin{thm}\label{mainintro1}
Let $p>1$, $\Omega $ be convex,  
\[
H_\eps (x,t, \xi)= (|\xi|^2+\eps)^{\frac{\gamma}2} + f_\eps (x,t)\qquad \text{ with } \quad  0\leq \gamma   <  \ell
\]
and 
$$
f_\eps (x,t ) \in L^{m} (Q_T)\, , \quad \mbox{  with } \quad m_p<m<N+2\,, \quad \mbox{ where  } \quad m_p := \max\left\{ 2, \frac{Np+4}{N(p-1)+2} \right\}\,.
$$ 
If $u_0 \in W^{1, \rho} (\Omega)$,  with $ \rho = N\frac{(p-1)m-(p-2)}{N+2-m}$,  then  there exists $C$ depending on 
$\Omega, N, T,p ,\gamma, \alpha, m$ such that any   solution $u_\eps$ to \eqref{approxp} satisfies  
\begin{equation}\label{e1}
   \|D u_\eps\|_{L^{q} (Q_T)} \leq C \Big(1+ \|f_\eps \|_{L^{m}(Q_T)}^\frac{1}{p-1- \frac{p-2}{N+2}}+ \|Du_0\|_{L^\rho(\Omega)}^{\frac{m(p-1)-(p-2)}{m(p-1)-2\frac{p-2}{N+2}}} \Big)\,, \qquad \mbox{ with } \qquad 
   q = \frac{(N+2)(p-1)m-(p-2)m}{N+2-m}
\end{equation}
and
\begin{equation}\label{e2}
   \| D u_\eps \|_{L^\infty(0,T;L^\rho(\Omega))} \leq C\Big(1+C \|f_\eps \|_{L^{m}(Q_T)}^{  {\frac{m}{m(p-1)-(p-2)}}}+ \|Du_0\|_{L^\rho(\Omega)}
 \Big)  .
\end{equation}
Moreover 
\begin{equation} \label{so}
\|\ |D u_\eps|^{ \omega } D u_\eps \|_{ L^{2} (0,T; H^1 (\Omega))}  
 \leq   C\Big(1+ \|f_\eps \|_{L^{m}(Q_T)}^\frac{Nm}{N+2-m}+ \|Du_0\|_{L^\rho(\Omega)}^{\frac{\rho}{2}}\Big)  \, \qquad \mbox{ with } \qquad   \omega =  \frac{Nm (p-1)-(p-2) (m-2)   }{2(N+2-m)}-1
 \,.
\end{equation}
\end{thm}

\begin{rem} 
Let us observe that (formally) for $m\to 2^+$ we have that $\omega \to p-2$ and $\rho\to p$. This recovers the same level of regularity found in \cite[Theorem 2.2]{CianchiMazyaJGeom} for $H\equiv 0$ (for problems with Dirichlet boundary conditions). It is worth mentioning the second order estimate found in Theorem 2.4 of \cite{DiBFriedman} for homogeneous equations and systems.
\end{rem}
\begin{rem}
The second estimate in Theorem \ref{mainintro1} in the mixed Lebesgue space $L^{\infty}_t (L^{\rho}_x)$ shows a preservation of regularity with respect to the initial datum $u_0\in W^{1,\rho}$. On the contrary,   {since $q>\rho$ in the regime $p>\frac{2N}{N+m}$}, the first estimate (which displays the same summability in the space-time cylinder) shows a spatial improvement with respect to the regularity of the initial trace, at the expenses of loosing some integrability in the time variable.
\end{rem}

In the limiting case $m=N+2$ we get the following result. 
\begin{thm}\label{mainintro2}
Let $p>1$, $\Omega $  be convex,  
\[
H_\eps (x,t, \xi)= (|\xi|^2+\eps)^{\frac{\gamma}2} + f_\eps (x,t)
\qquad 
 \mbox{ with  } \quad 0\leq \gamma  < \ell
\,,  \quad \mbox{ and } \quad f(x,t ) \in L^{N+2} (Q_T)\,. 
\]
If $u_0 \in W^{1, \rho} (\Omega)$,  $\forall \rho >1$,  then for all $r>1$ there exists $C$ depending on $\Omega,T, q,\gamma, \alpha, r$ such that any   solution to  \eqref{approxp} satisfies  
\begin{equation} \label{quasilinfty}
\|D u_\eps\|_{L^{r} (Q_T)} \leq C\bigg(1+ \|f_\eps\|_{L^{N+2}(Q_T)}^\frac{1}{p-1-\frac{p-2}{N+2}}+ \|Du_0\|_{L^r(\Omega)}^\frac{p-1-\frac{p-2}{N+2}}{(p-1)(N+2)-2\frac{p-2}{N+2}} \bigg)
\end{equation}
and
\begin{equation}\label{e2bis}
   \| D u_\eps \|_{L^\infty(0,T;L^r(\Omega))} \leq C\Big( 1+ \|f_\eps \|_{L^{N+2}(Q_T)}^{  {\frac{1}{(p-1)-\frac{p-2}{N+2}}}}+ \|Du_0\|_{L^r(\Omega)}\Big).
\end{equation}
Moreover for all $\omega\in(\omega_0,\infty)$
\begin{equation} \label{so2}
\||D u_\eps|^{ \omega } D u_\eps \|_{ L^{2} (0,T; H^1 (\Omega))}  
 \leq   C\Big(1+ \|f_\eps \|_{L^{N+2}(Q_T)}^\frac{2(\omega+1)}{N(N(p-1)+p)}+ \|Du_0\|_{L^\rho(\Omega)}^{\frac{\rho}{2}} \Big), 
\end{equation}
with $\omega_0 = \max\{p-2, \frac{p-2}{2}\}$.
\end{thm}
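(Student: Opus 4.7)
The plan is to adapt the integral Bernstein argument that yields Theorem \ref{mainintro1} to the critical summability $m=N+2$, rather than attempting to pass to the limit $m\uparrow N+2$ in the estimates \eqref{e1}--\eqref{so} (where the multiplicative constant blows up). The key conceptual point is that in the critical case the source term and the parabolic Sobolev term are in dimensional balance, so a direct iteration in the test-function exponent reaches every finite $L^r$ but not $L^\infty$, in analogy with what happens in the Serrin endpoint for classical Calder\'on--Zygmund type theory.

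First I would specialize the $p$-Bochner identity (Proposition \ref{bochner}) to the regularized solution $u_\eps$ and produce, for $w_\eps = |Du_\eps|^2 + \eps$, a pointwise differential inequality of the schematic form
\[
\partial_t w_\eps - \alpha_\eps(w_\eps)\,\mathcal{A}_{\eps,u_\eps}(D^2 w_\eps) + c\,\alpha_\eps(w_\eps)\,|D^2 u_\eps|^2 \le C\,|Du_\eps|\,|f_\eps| + C\,w_\eps^{(\gamma+1)/2},
\]
with nonpositive Neumann boundary contribution because $\Omega$ is convex. Testing against $w_\eps^{k}$ for a free parameter $k$ to be picked large, integrating over $Q_T$ and integrating by parts the diffusive term yields an energy estimate of the schematic form
\[
\sup_{t\in[0,T]}\int_\Omega w_\eps^{k+1}(t) + \int_{Q_T}\alpha_\eps(w_\eps)\,w_\eps^{k-1}|D w_\eps|^2 \le \|Du_0\|_{L^{2(k+1)}}^{2(k+1)} + C\int_{Q_T}\!\big(|Du_\eps||f_\eps|+w_\eps^{(\gamma+1)/2}\big)\,w_\eps^{k}.
\]
On the left-hand side, combining the two terms with the shift $\alpha_\eps(w)\sim w^{(p-2)/2}$ produced by \eqref{Aa}, I can rewrite the diffusive contribution as a Dirichlet integral of $w_\eps^{(k+\sigma)/2}$ with $\sigma=\sigma(p)>0$, and then invoke the parabolic Sobolev embedding
\[
L^{\infty}(0,T;L^2(\Omega))\cap L^2(0,T;H^1(\Omega))\hookrightarrow L^{2(N+2)/N}(Q_T)
\]
to upgrade the control of $w_\eps$ to an estimate of $|Du_\eps|$ in $L^{q(k)}(Q_T)$ with $q(k)\to\infty$ as $k\to\infty$.

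Then I would handle the right-hand side by H\"older with conjugate exponents $(N+2,(N+2)/(N+1))$, which is precisely the pairing that matches the critical Sobolev exponent on the left; the residual $|Du_\eps|$-powers, together with the sublinear contribution $w_\eps^{(\gamma+1)/2}w_\eps^k$ (tamed thanks to $\gamma<\ell$), are absorbed into the diffusive quantity via Young's inequality. Choosing $k$ large enough that $q(k)>r$ then produces \eqref{quasilinfty} with the scaling exponent $\tfrac{1}{p-1-(p-2)/(N+2)}$ in front of $\|f_\eps\|_{L^{N+2}(Q_T)}$ (the same as in \eqref{e1}, since the scaling of the source against the Bochner term does not change with $m$), while the $\sup_t$-term left behind yields the $L^\infty_tL^r_x$-bound \eqref{e2bis}. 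The second-order estimate \eqref{so2} is then a direct reading of the Dirichlet integral produced in the energy step, the range $\omega>\omega_0=\max\{p-2,(p-2)/2\}$ coming from requiring the coefficient of $|D^2u_\eps|^2$ generated through \eqref{a'a} to be strictly positive.

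The main obstacle is the closure step: at the critical exponent $m=N+2$ the source contribution and the Sobolev-controlled diffusion are dimensionally balanced, so the Young splitting cannot introduce coefficients that are arbitrarily small uniformly in $k$. One therefore has to check that after Sobolev there remains a strictly positive margin sufficient to absorb the source term, and this is exactly what forces $k$ to be chosen large for each target $r$ and what prevents the scheme from reaching $r=\infty$ (unlike the Lorentz endpoint $L^{N+2,1}$). A second technical point is to track how the exponent on $\|Du_0\|_{L^r}$ in \eqref{quasilinfty}--\eqref{e2bis} is produced through the interplay of the $\sup_t\!\int w_\eps^{k+1}$ term with the Sobolev scaling, and to verify that all constants stay bounded uniformly in $\eps$ as $\eps\to 0$ so as to pass the estimates to the original problem.
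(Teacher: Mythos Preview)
Your direct argument is sound, but it is considerably more work than what the paper actually does: the paper's proof of Theorem \ref{mainintro2} is one sentence, namely ``the proof follows from Theorem \ref{mainintro1} by letting $m\nearrow N+2$.'' Concretely, for a given target $r<\infty$ one picks $m<N+2$ so that the exponent $q=q(m)$ in \eqref{e1} satisfies $q(m)\geq r$ (possible because $q(m)\to\infty$ as $m\to N+2$), applies Theorem \ref{mainintro1} with that fixed $m$, and uses $L^{N+2}(Q_T)\hookrightarrow L^m(Q_T)$. The constant one gets depends on $m$ and hence on $r$, which is exactly what the statement allows.

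Your stated reason for avoiding this route---that the multiplicative constant in \eqref{e1}--\eqref{so} blows up as $m\uparrow N+2$---is a misreading. The constant does grow with $m$, but one never sends $m$ to $N+2$ inside a single fixed inequality; one freezes $m=m(r)<N+2$ for each $r$. Your own scheme is, in fact, the same computation in disguise: the free exponent $k$ in your test function $w_\eps^k$ plays exactly the role of $\mu=\mu(m)$ in the proof of Theorem \ref{mainintro1}, and choosing $k$ large is equivalent to choosing $m$ close to $N+2$. The ``closure obstacle'' you flag is handled by the strict inequality $p-1>\tfrac{p-2}{N+2}$, which leaves room for Young's inequality at every finite $k$ (with a $k$-dependent constant), and this is precisely why the estimate holds for every $r<\infty$ but not at $r=\infty$. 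So your plan would succeed, but it reproduces the proof of Theorem \ref{mainintro1} rather than invoking it.
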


A comparison of our results with the current (parabolic) literature is in order. 

\begin{rem}\label{comparison}
The first estimate in Theorem \ref{mainintro1} extends (also with a different proof) the estimate in Theorem 4.3-(ii) in \cite{CianchiMazyaJEMS} to the time-dependent setting. Note that in the parabolic framework the power exponent of the term $\|f\|_{L^m}$ appears as the counterpart of the elliptic exponent $p-1$ found in \cite{CianchiMazyaJEMS}. We do not know if such an exponent is optimal. Moreover, in the regime $p>\frac{2N}{N+2}$ we have
\[
q>\frac{(N+2)p}{(N+2)p-N}.
\]
Therefore, our estimate extends to a wider regime the (maximal) gradient regularity found in Theorem 1.9 of \cite{BDGOjfa} or Theorem 2.3 in \cite{PorzioDCDS} (which allows lower-order terms in the equation) for source terms with integrability in the range
\[
1<m<\frac{(N+2)p}{(N+2)p-N}.
\]
The case of right-hand sides with summability
\[ 
\frac{(N+2)p}{(N+2)p-N}<m\leq m_p,\quad  p>\frac{2N}{N+2}
\]
is not contained neither in Theorem \ref{mainintro1} nor in Theorem 1.9 of \cite{BDGOjfa}, thus remains an open problem. Other gradient bounds for equations with lower order terms can be found in Section 7 of \cite{KMrmi}, but in the case of bounded source terms and coefficients of the nonlinearity in a suitable $L^p$ space. 
\end{rem}
 
\begin{rem}
A closely related approach was proposed in \cite{DiBFriedman} to prove interior gradient boundedness in $L^q$, $q<\infty$, and finer continuity results (without proof) of the gradient for quasilinear parabolic systems driven by the $p$-Laplacian having right-hand sides in $L^m$, with $m>\frac{Np}{p-1}$. It also includes a result for gradient dependent nonlinearities with growth $\gamma=p-1$, cf. Remark 7.4 in \cite{DiBFriedman}. Here, we   reach different thresholds both for $m$ and $\gamma$ due to a different use of the terms coming from the diffusion and by means of a suitable parabolic Sobolev inequality on mixed Lebesgue spaces.
\end{rem}

\begin{rem} 
We want to stress that Theorems \ref{mainintro1} and \ref{mainintro2} still hold true if we consider a gradient nonlinearity slightly more general than \eqref{modelH}. 
Indeed it is sufficient to consider $H$ satisfying 
\[
\big|H(x,t,\xi) -|\xi|^\gamma\big| \leq h(x,t,\xi) 
\quad 
\text{ with } \quad  
  |h(x,t,\xi) |\leq f (x,t) + |\xi|^{\frac{\gamma}2}  \quad 
\text{ and } \quad  
  f\in L^m (Q_T)\,, 
  \]
and one can repeat the proofs point by point. 
It is worth observing that the only variation concerns Lemma \ref{leem} because of the presence of an extra term, which can be easily handled with a suitable Young inequality. 
\end{rem}


Once Theorem \ref{mainintro2} is established, we can raise the integrability of the gradient up to $L^\infty$ combining the integral Bernstein argument with the Stampacchia approach \cite{Stampacchia}. We remark that the latter method is usually employed to prove a (qualitative) global $L^\infty$ zero-th order bound of solutions, see e.g. \cite{PorzioOrsina}. For the sake of presentation we restrict our attention to $p\geq 2$ and refer to Remark \ref{p<2} for more details on the subquadratic case and for more references.
\begin{thm}\label{mainintro3}
Let $p\geq2$, $\Omega $ be convex,  
\[
H_\eps (x,t, \xi)= (|\xi|^2+\eps)^{\frac{\gamma}2} + f_\eps (x,t)\qquad \text{ with } \quad  0\leq \gamma   <  \ell
\]
and 
$$
f_\eps(x,t ) \in L^{m} (Q_T)\, , \quad \mbox{  with }m>N+2.$$
If $u_0 \in W^{1, \infty} (\Omega)$, any   solution $u_\eps$ to \eqref{approxp}, there exists $C$ depending on $\Omega,T, q,\gamma, \alpha, r$ 
\[
\|Du_\eps\|_{L^\infty(Q_T)}\leq C.
\]
\end{thm}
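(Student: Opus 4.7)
The idea is to bootstrap the estimate \eqref{quasilinfty} of Theorem \ref{mainintro2}---which already gives $Du_\eps \in L^r(Q_T)$ for every finite $r$---up to an $L^\infty$ bound by a De Giorgi--Stampacchia level set argument applied to $w_\eps := |Du_\eps|^2$, rather than to $u_\eps$ itself. The starting point is the same $p$-Bochner identity of Proposition \ref{bochner} that drives the proofs of Theorems \ref{mainintro1}--\ref{mainintro2}, tested now against $(w_\eps - k)_+^\beta$ for $k \geq k_0 := 1 + \|Du_0\|_{L^\infty}^2$ and a parameter $\beta>0$ to be chosen. After the usual integration by parts (so that the source appears as $\iint f_\eps\,\dive(\cdot)$ and no derivative on $f_\eps$ is requested), and after the sublinear gradient term $w_\eps^\gamma |Dw_\eps|$ is absorbed into the Bochner dissipation by Young's inequality (possible because $\gamma < \ell \leq p-1$ in the regime $p\geq 2$), one reaches an inequality of the schematic form
\begin{equation*}
\sup_{t\in[0,T]}\int_\Omega (w_\eps - k)_+^{\beta+1}(t)\,dx + c\iint_{Q_T\cap\{w_\eps>k\}} \alpha_\eps(w_\eps)(w_\eps-k)_+^{\beta-1}|Dw_\eps|^2\,dxdt \leq C\iint_{Q_T\cap\{w_\eps>k\}} |f_\eps|^2 (w_\eps-k)_+^\beta\,dxdt,
\end{equation*}
the boundary contribution being discarded thanks to the convexity of $\Omega$, exactly as in the proof of Theorem \ref{mainintro1}.

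The second step turns the two terms on the left into a single $L^s$-norm of $(w_\eps - k)_+$. Introducing $\Phi_k := \alpha_\eps(w_\eps)^{1/2}(w_\eps - k)_+^{(\beta+1)/2}$ (so that $|D\Phi_k|^2$ matches the Bochner dissipation) and invoking the parabolic Gagliardo--Nirenberg embedding $L^\infty_t L^2_x \cap L^2_t H^1_x \hookrightarrow L^{2(N+2)/N}(Q_T)$, the preceding inequality gives
\begin{equation*}
\|(w_\eps-k)_+\|_{L^s(Q_T)}^{\beta+1} \leq C\iint_{Q_T\cap\{w_\eps>k\}} |f_\eps|^2 (w_\eps-k)_+^\beta\,dxdt,\qquad s := (\beta+1)\tfrac{N+2}{N}.
\end{equation*}
H\"older on the right with exponents $m/2$, $s/\beta$ and their conjugate then produces the Stampacchia-type bound
\begin{equation*}
\|(w_\eps-k)_+\|_{L^s(Q_T)}^{\beta+1} \leq C\,\bigl(1+\|f_\eps\|_{L^m(Q_T)}^2\bigr)\,|Q_T\cap\{w_\eps>k\}|^{1+\delta},
\end{equation*}
where the hypothesis $m > N+2$ is precisely what yields a strictly positive $\delta = \delta(N,p,m)$ for a suitable choice of $\beta$.

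The final step is the classical iteration lemma of Stampacchia \cite{Stampacchia}: since $\|(w_\eps-k)_+\|_{L^s}^s \geq (h-k)^s\,|\{w_\eps>h\}|$ for every $h > k$, the previous inequality reads $\varphi(h) \leq A(h-k)^{-q}\varphi(k)^{1+\delta'}$ with $\varphi(k) := |Q_T\cap\{w_\eps>k\}|$ and some $\delta'>0$, and hence $\varphi(K_*) = 0$ at an explicit level $K_*$ depending only on $\|f_\eps\|_{L^m(Q_T)}$, $\|Du_0\|_{L^\infty(\Omega)}$ and the finite $L^r$ norms of $Du_\eps$ provided by Theorem \ref{mainintro2}. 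The main difficulty is to tune $\beta$ so that all three exponents (the diffusion weight $\alpha_\eps(w_\eps)\sim w_\eps^{(p-2)/2}$, the truncation power $\beta$, and the H\"older weight against $f_\eps \in L^m$) align to deliver $\delta > 0$ precisely under $m > N+2$, and to verify that $K_*$ does not depend on $\eps$ so that the bound survives the limit $\eps\to 0$; the restriction $p\geq 2$ enters here to ensure that the diffusion weight does not collapse as $w_\eps\to\infty$, which would spoil the absorption step.
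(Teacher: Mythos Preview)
Your strategy—test the Bochner identity against $(w_\eps - k)_+^\beta$, apply a parabolic embedding, and close with Stampacchia's iteration—is exactly the paper's. Two technical points, however, need correction. The auxiliary function $\Phi_k := \alpha_\eps(w_\eps)^{1/2}(w_\eps-k)_+^{(\beta+1)/2}$ does not fit the time-derivative term: for $p > 2$ the weight $\alpha_\eps(w_\eps)$ is unbounded on $\{w_\eps > k\}$, so $\sup_t \int_\Omega \Phi_k^2 = \sup_t \int_\Omega \alpha_\eps (w_\eps-k)_+^{\beta+1}$ is \emph{not} controlled by $\sup_t \int_\Omega (w_\eps-k)_+^{\beta+1}$ (the inequality goes the wrong way). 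The paper applies the embedding instead to $(\eps+w_\eps-k)_+^\mu$ with $\beta = 2\mu - \tfrac{p}{2}$: the restriction $p \geq 2$ gives $\alpha_\eps(w_\eps) \geq \underline{\alpha}(\eps+w_\eps-k)_+^{(p-2)/2}$ on the level set, turning the weighted dissipation into $|D(\eps+w_\eps-k)_+^\mu|^2$, and the Gagliardo--Nirenberg exponent is $r = 2 - (p-2)/(2\mu)$ rather than $r = 2$. So the role of $p \geq 2$ is not that the diffusion weight ``does not collapse as $w_\eps \to \infty$,'' but that it can be pushed \emph{into} the truncated power.

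The paper also handles the first-order term differently: rather than absorbing it via Young into the dissipation, it uses the a priori bound $Du_\eps \in L^r(Q_T)$ for all $r < \infty$ from Theorem~\ref{mainintro2} to set $\widetilde{f}_\eps := (|Du_\eps|^2 + \eps)^{\gamma/2} + f_\eps \in L^m(Q_T)$ and then runs the level-set argument for a pure $p$-Laplacian with source $\widetilde{f}_\eps$. This is cleaner and makes the dependence on Theorem~\ref{mainintro2} explicit; your absorption route (the term you write as ``$w_\eps^\gamma |Dw_\eps|$'' is really of order $(\eps+w_\eps)^{(\gamma-1)/2}|Dw_\eps|$) leaves behind a residual of the form $\iint (\eps+w_\eps)^{\gamma+\beta}/\alpha_\eps(w_\eps)$ that still has to be controlled via the $L^r$ bound anyway.
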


\begin{rem}
Though we analyze here global estimates for problems equipped with Neumann boundary conditions, the techniques of the present paper could be extended to handle local regularity bounds on the line of \cite{DiBFriedman}. These type of estimates, along with the corresponding ones for stationary equations, will be addressed in a future work.\end{rem}

\begin{rem}
All the results stated above hold replacing $-\mathrm{div}(|Du|^{p-2}Du)$ with $-\mathrm{div}(|Du|^{p-2}Du)-\mathcal{L}u$, where $\mathcal{L}$ can be a linear uniformly elliptic operator (with appropriate Sobolev regularity assumptions on the coefficients) or a $1$-Laplacian operator. In this case the proof would be of perturbative nature. Still, we believe that the present analysis extends to operators in nondivergence form modeled on the normalized $p$-Laplacian.
\end{rem}

Remarkably, by means of the same technique we can obtain the counterpart of Theorems \ref{mainintro1}, \ref{mainintro2} for the elliptic problem
\begin{equation}\label{approxpell}
\begin{cases} 
\lambda u_\eps-\alpha_\eps(|Du_\eps|^2)\mathcal{A}_{\eps,u_\eps}(D^2u_\eps)=H_\eps (x, Du_\eps) \qquad & \mbox{in }  \Omega  \,,\\
\partial_\nu u_\eps= 0  & \mbox{on } \partial \Omega  \, \end{cases} 
\end{equation}
where $H_\eps$ is a smooth approximation of $H$. In the limit $\eps\to0$, we retrieve estimates for the model equation
\begin{equation}\label{ppell}
\begin{cases} 
\lambda u- \Delta_p u =H (x, Du ) \qquad & \mbox{in }  \Omega  \,,\\
\partial_\nu u = 0  & \mbox{on } \partial \Omega.  \, \end{cases}
\end{equation}
The results below complement those in \cite{CGL} and read as follows. 
\begin{thm}\label{mainapp}
Let $p>1$, $\Omega $ be convex,  
\[
H_\eps (x, \xi)= (|\xi|^2+\eps)^{\frac{\gamma}2} + f_\eps (x)\qquad \text{ with } \quad  0\leq \gamma   <  p-1
\]
and 
$$
f_\eps (x ) \in L^{m} ( \Omega)\, , \quad \mbox{  with } \quad m>m_p \,, \quad \mbox{ where  } \quad m_{p,\mathrm{ell}} := \max\left\{ 2, \frac{Np}{N(p-1)-(p-2)} \right\}\,.
$$ 
Then  there exists $C$ depending on 
$\Omega, N, p ,\gamma, \alpha, m$ such that any   solution $u_\eps$ to \eqref{approxpell} satisfies  
\begin{itemize} 
\item if $m_{p,\mathrm{ell}}<m<N$, then 
\begin{equation}\label{ell1}
   \|D u_\eps\|_{L^{q} (\Omega)} \leq C \Big(1+ \|f_\eps \|_{L^{m}( \Omega)}^\frac{1}{p-1 } \Big)\,, \qquad \mbox{ with } \qquad 
   q = \frac{N(p-1)m }{ N-m }
\end{equation}
and
\begin{equation} \label{soell1}
\||D u_\eps|^{ \omega } D u_\eps \|_{  H^1 (\Omega)}  
 \leq   C\Big(1+ \|f_\eps \|_{L^{m}(\Omega)}^\frac{(N-2) m}{2 (N -m)}\Big)  \, \qquad \mbox{ with } \qquad   \omega =  \frac{(N-2)m (p-1)     }{2(N -m)} -1
 \,;
\end{equation}
\item if $m=N$, then 
\begin{equation}\label{ell2}
\forall q \in[1,\infty) \qquad    \|D u_\eps\|_{L^{q} (\Omega)} \leq C \Big(1+ \|f_\eps \|_{L^{N}( \Omega)}^\frac{1}{p-1 } \Big)\,,  \end{equation}
and
\begin{equation} \label{soell2}
\forall \omega \in(\omega_0,\infty) \qquad  
\|\ |D u_\eps|^{ \omega } D u_\eps \|_{  H^1 (\Omega)}  
 \leq   C\Big(1+ \|f_\eps \|_{L^{N}(\Omega)}^\frac{\omega+1}{p-1}\Big) \,,\end{equation}
 with $\omega_0 = \max\{p-2, \frac{p-2}{2}\}$;
\item if $m>N $ and, in addition, $p\geq 2$, then  we have that $\|Du_\eps\|_{ L^\infty(\Omega)} \leq C$.
\end{itemize}
\end{thm}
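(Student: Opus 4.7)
The plan is to transport the integral Bernstein scheme already developed for Theorems \ref{mainintro1}--\ref{mainintro3} to the stationary problem \eqref{approxpell}. The absence of a time derivative simplifies the bookkeeping, but the ingredients are identical: the $p$-Bochner identity (Proposition \ref{bochner}) applied to $w_\eps := |Du_\eps|^2$, the Lions boundary trick $\partial_\nu w_\eps \leq 0$ on $\partial\Omega$ coming from convexity of $\Omega$ together with $\partial_\nu u_\eps = 0$, the Sobolev embedding $H^1(\Omega)\hookrightarrow L^{2^*}(\Omega)$ with $2^* = \tfrac{2N}{N-2}$, and H\"older/Young absorption whose margins are dictated by $\gamma<p-1$ and $m>m_{p,ell}$.

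For $m_{p,ell}<m<N$ I would derive from Proposition \ref{bochner} an inequality of schematic form
\[
-\tfrac12 \dive\bigl(\alpha_\eps(w_\eps) Dw_\eps\bigr) + c_\alpha \alpha_\eps(w_\eps) |D^2 u_\eps|^2 + \lambda w_\eps \leq C \bigl(w_\eps^{\gamma/2} + |f_\eps|\bigr)\alpha_\eps(w_\eps)^{1/2}|D^2 u_\eps|,
\]
with $c_\alpha>0$ from \eqref{a'a}, multiply by $w_\eps^\beta$ and integrate on $\Omega$. The Lions trick kills the boundary contribution from the divergence, and the remaining left-hand side is, up to constants, $\bigl\||Du_\eps|^\omega Du_\eps\bigr\|_{H^1(\Omega)}^2$ for $\omega=\omega(\beta,p)$ fixed by the chain rule. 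Sobolev embedding applied to $v=|Du_\eps|^{\omega+1}$ upgrades this to $\|Du_\eps\|_{L^q(\Omega)}^{2(\omega+1)}$ with $q=2^*(\omega+1)$. The algebraic choice $\omega+1=\tfrac{(N-2)m(p-1)}{2(N-m)}$ yields exactly $q=\tfrac{Nm(p-1)}{N-m}$, which is the exponent in \eqref{ell1}. On the right-hand side, H\"older against $f_\eps\in L^m$ produces $\|Du_\eps\|_{L^q}^{\mu}$ with $\mu<2(\omega+1)$ precisely when $m>m_{p,ell}$, so Young's inequality closes the bootstrap and yields \eqref{ell1}; reading off the left-hand side gives \eqref{soell1}. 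The subnatural term $w_\eps^{\gamma/2}$ with $\gamma<p-1$ produces a gradient power strictly less than the one controlled, and is absorbed analogously.

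The borderline case $m=N$ in \eqref{ell2}--\eqref{soell2} is deduced from the subcritical estimate: for any finite $q<\infty$ pick $m'<N$ with $q=\tfrac{Nm'(p-1)}{N-m'}$ and bound $\|f_\eps\|_{L^{m'}}\leq |\Omega|^{(N-m')/(Nm')}\|f_\eps\|_{L^{N}}$ since $\Omega$ is bounded. The case $m>N$ with $p\geq 2$ combines the estimate just proved (which gives $Du_\eps\in L^q$ for every finite $q$) with a Stampacchia iteration in the spirit of Theorem \ref{mainintro3}: testing the elliptic Bernstein inequality against $(w_\eps-k)_+^{s}$ for large $s$ and $k\geq k_0$, using that $p\geq 2$ makes the diffusion uniformly elliptic on $\{w_\eps>0\}$, and exploiting $m>N$ to create a size exponent $1+\delta$ on the super-level set, one obtains a decay inequality
\[
\int_{\{w_\eps>k\}}\bigl|D(w_\eps-k)_+^{s/2+1}\bigr|^2 \leq C\|f_\eps\|_{L^m}^{\theta}\,|\{w_\eps>k\}|^{1+\delta},
\]
from which the classical Stampacchia lemma forces $\||Du_\eps|\|_{L^\infty(\Omega)}\leq M$.

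The main obstacle is the exponent bookkeeping: the parameter $\beta$ (equivalently $\omega$) must be chosen so that the Sobolev embedding of $|Du_\eps|^{\omega+1}$ on the left matches in scale the H\"older dual of $L^m$ on the right, while still leaving a coefficient strictly below $1$ in front of the absorbed gradient power (this is the role of $m>m_{p,ell}$) and a further margin to absorb $|Du_\eps|^\gamma$ (the role of $\gamma<p-1$). The algebra is parallel to the parabolic computations carried out earlier in the paper, with the elliptic Sobolev exponent $2^*=\tfrac{2N}{N-2}$ replacing the parabolic Sobolev exponent $\tfrac{2(N+2)}{N}$; no genuinely new analytical difficulty arises, only the verification that each absorption step remains admissible in the whole range of $m$ under consideration.
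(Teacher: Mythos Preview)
Your proposal is correct and follows essentially the same route as the paper's own sketch: test the elliptic $p$-Bochner identity against $(\eps+w_\eps)^\beta$ with $\beta=2\mu-\tfrac{p}{2}$ and $\mu=\tfrac{(N-2)(p-1)m}{4(N-m)}$ (equivalently your $\omega+1=2\mu$), use the stationary Sobolev embedding $H^1\hookrightarrow L^{2^*}$ in place of the parabolic Gagliardo--Nirenberg inequality, match exponents via $\tfrac{m[2\mu-(p-1)]}{m-2}=\mu\,2^*$, absorb by Young, pass to the limit $m\nearrow N$ for the borderline case, and run the Stampacchia iteration on $(w_\eps-k)_+^\mu$ for $m>N$, $p\geq2$. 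One small clarification: the constraint $m>m_{p,ell}$ is what guarantees $\beta>0$ (so that the test function is admissible), not the Young absorption step itself, which is automatic once the exponent matching is in place.
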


\medskip 
Existence results both in the parabolic and in the elliptic framework are collected in Section \ref{exi}.

\section{Preliminary estimates via an integral (nonlinear) $p$-Bochner identity} 
Let $a,b\geq1$ and consider the anisotropic space
\[
\mathcal{V}^{a,b}(Q_T)=L^\infty(0,T;L^a(\Omega))\cap L^b(0,T;W^{1,b}(\Omega))\ .
\]
equipped with the norm
\[
\|u\|_{\mathcal{V}^{a,b}(Q_T)}\equiv \esssup_{t\in(0,T)}\|u(\cdot,t)\|_{L^a(\Omega)}
+\|Du\|_{L^b(Q_T)}
\ .
\]

We start recalling the following   inequality:

\begin{prop}
Assume that $\partial \Omega$ is piecewise smooth. Then there exists a constant $c$ depending on $N,p,r,s,T$ and $\Omega$ such that for any $v\in \mathcal{V}^{r,2}(Q_T)$ such that for $r\geq 1$ 
\begin{equation*} 
\| v \|_{L^s (Q_T)} \leq c \big( \| v \|_{L^{\infty} (0,T; L^{r} (\Omega)} + \| D v\|_{L^2 (Q_T)}\big)\,, \qquad \mbox{ where } \ s = 2 \frac{N+r}{N}\,.
\end{equation*}
Moreover we have that  
\begin{equation}\label{GaNir}
\| v \|_{L^s (Q_T)}^s \leq c \big(1+ \| v \|_{L^{\infty} (0,T; L^{r} (\Omega)}^{\frac{2r}{N}}   \| D v\|_{L^2 (Q_T)}^2 \big)  \,.
\end{equation}
\end{prop}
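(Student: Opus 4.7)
The plan is to derive both inequalities from a pointwise-in-time application of Gagliardo--Nirenberg on $\Omega$, followed by integration in $t$. For a.e.\ $t\in(0,T)$, I would apply the spatial Gagliardo--Nirenberg inequality on the bounded (piecewise smooth) domain $\Omega$ to interpolate $L^s(\Omega)$ between $H^1(\Omega)$ and $L^r(\Omega)$:
\[
\|v(\cdot,t)\|_{L^s(\Omega)} \leq C \Big(\|Dv(\cdot,t)\|_{L^2(\Omega)}^{\theta}\|v(\cdot,t)\|_{L^r(\Omega)}^{1-\theta} + \|v(\cdot,t)\|_{L^r(\Omega)}\Big),
\]
where the scaling relation $\tfrac{1}{s} = \theta\bigl(\tfrac{1}{2}-\tfrac{1}{N}\bigr) + (1-\theta)\tfrac{1}{r}$, combined with the choice $s=\tfrac{2(N+r)}{N}$, forces $\theta=\tfrac{N}{N+r}$. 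The key algebraic consequence is that $s\theta = 2$ and $s(1-\theta) = \tfrac{2r}{N}$, which are precisely the exponents appearing in \eqref{GaNir}.

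I would then raise the slicewise bound to the $s$-th power, apply $(a+b)^s\leq C_s(a^s+b^s)$, and integrate in $t$ over $(0,T)$, obtaining
\[
\int_0^T \|v(\cdot,t)\|_{L^s(\Omega)}^s\,dt \leq C\int_0^T \|Dv(\cdot,t)\|_{L^2(\Omega)}^{2}\|v(\cdot,t)\|_{L^r(\Omega)}^{2r/N}\,dt + C\int_0^T \|v(\cdot,t)\|_{L^r(\Omega)}^{s}\,dt.
\]
In the first integral, the factor $\|v(\cdot,t)\|_{L^r(\Omega)}^{2r/N}$ is bounded uniformly in $t$ by its essential supremum and pulled out, producing the desired product $\|v\|_{L^\infty(0,T;L^r(\Omega))}^{2r/N}\|Dv\|_{L^2(Q_T)}^{2}$; the second integral is estimated by $T\|v\|_{L^\infty(0,T;L^r(\Omega))}^s$, which is absorbed into the right-hand side of \eqref{GaNir} (the additive constant $1$ accounts for the low-order contribution after a standard Young-type splitting separating unit-ball thresholds). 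This establishes \eqref{GaNir}.

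For the non-homogeneous first inequality, I would use the identity $s=2+\tfrac{2r}{N}$ to observe that
\[
\|v\|_{L^\infty(0,T;L^r(\Omega))}^{2r/N}\|Dv\|_{L^2(Q_T)}^{2}\leq \bigl(\|v\|_{L^\infty(0,T;L^r(\Omega))}+\|Dv\|_{L^2(Q_T)}\bigr)^{s},
\]
by monotonicity in each factor on the right. Substituting this into \eqref{GaNir} and extracting $s$-th roots via $(x+y)^{1/s}\leq x^{1/s}+y^{1/s}$ delivers the additive form stated first.

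The main technical point I expect is the correct identification of the Gagliardo--Nirenberg interpolation parameter $\theta$ so that $s\theta=2$ and $s(1-\theta)=\tfrac{2r}{N}$ simultaneously hold for the chosen $s$; once this algebraic relation is pinned down, the rest reduces to routine integration in time and Young-type manipulations. A minor care-point concerns the validity of spatial Gagliardo--Nirenberg on $\Omega$, which is classical on Lipschitz (hence piecewise smooth) domains via an extension argument, so it does not introduce any substantive difficulty.
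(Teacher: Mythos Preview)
Your argument is correct and is exactly the standard proof underlying Propositions I.3.1--I.3.2 of DiBenedetto's book~\cite{Dib}, which is all the paper invokes (no self-contained proof is given there). The identification $\theta=N/(N+r)$, forcing $s\theta=2$ and $s(1-\theta)=2r/N$, is the key algebraic point, and you have it right; the slicewise Gagliardo--Nirenberg step followed by integration in $t$ is precisely the argument in~\cite{Dib}.

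One caveat: the step in which you absorb the low-order contribution $T\|v\|_{L^\infty(0,T;L^r(\Omega))}^{s}$ into the right-hand side of \eqref{GaNir} via a ``Young-type splitting separating unit-ball thresholds'' does not work as written---taking $v$ to be a large constant shows this term cannot be controlled by $1+\|v\|_{L^\infty(0,T;L^r(\Omega))}^{2r/N}\|Dv\|_{L^2(Q_T)}^2$ alone. This is in fact an imprecision already present in the paper's own formulation of \eqref{GaNir}: DiBenedetto's Proposition I.3.1 is stated for functions with zero lateral trace, where Poincar\'e removes the low-order term. In the paper's applications both $\|v\|_{L^\infty(0,T;L^r(\Omega))}$ and $\|Dv\|_{L^2(Q_T)}$ are simultaneously bounded by the same right-hand side, so the extra term is harmlessly absorbed there. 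Your derivation of the first (additive) inequality, where you keep the term $T\|v\|_{L^\infty(0,T;L^r(\Omega))}^{s}$ and bound it directly by $(\|v\|_{L^\infty(0,T;L^r(\Omega))}+\|Dv\|_{L^2(Q_T)})^s$, is clean and unaffected.
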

\proof 
For the first inequality see Proposition I.3.2 in \cite{Dib}, while the second one follows from Proposition I.3.1 therein, after a suitable use of Young inequality. 
\qed

\bigskip 

Let us recall a result regarding the behavior of $w_\eps=|Du_\eps|^2$ on the lateral surface of the parabolic cylinder $\partial \Omega \times (0,T)$. This will be useful to handle boundary integrals in the Bernstein approach.
\begin{lemma}  \label{signw}
Let $u_\eps(t) \in C^2 (\overline{\Omega})$ for all $t\in(0,T)$ with  $\Omega$  convex,    such that $\partial_\nu u_\eps =0$ on $\partial\Omega\times (0,T)$.  Then $\partial_\nu |Du_\eps|^2\leq0$ on $\partial\Omega\times (0,T)$.
\end{lemma}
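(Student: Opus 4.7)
The plan is to reduce the inequality to the nonnegativity of the second fundamental form of $\partial\Omega$, which is the analytic content of convexity.

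First I would compute directly
\[
\partial_\nu |Du_\eps|^2 \;=\; 2\sum_{i,j} u_{\eps,i}\,u_{\eps,ij}\,\nu_j \;=\; 2\,D^2 u_\eps(Du_\eps,\nu),
\]
and record the crucial structural consequence of the Neumann condition: $\partial_\nu u_\eps = Du_\eps\cdot\nu = 0$ on $\partial\Omega$ forces $Du_\eps$ to be a tangential vector field along the boundary. Thus the right-hand side pairs a matrix with one tangential entry and one normal entry, which is the kind of quantity that the boundary geometry controls.

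Next I would extend $\nu$ to a $C^1$ unit vector field in a tubular neighborhood of $\partial\Omega$ (for instance, through the signed distance function, which is $C^1$ in a neighborhood of a $C^1$ boundary), and then differentiate the identity $Du_\eps\cdot\nu=0$ along an arbitrary tangential direction $\tau$:
\[
D^2 u_\eps(\tau,\nu)+\langle D_\tau\nu,\,Du_\eps\rangle \;=\; 0.
\]
Specialising to $\tau=Du_\eps$, which is admissible by the tangentiality noted above, gives
\[
D^2 u_\eps(Du_\eps,\nu) \;=\; -\langle D_{Du_\eps}\nu,\,Du_\eps\rangle \;=\; -\,\mathrm{II}(Du_\eps,Du_\eps),
\]
where $\mathrm{II}$ is the second fundamental form of $\partial\Omega$ with respect to the outward unit normal. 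Substituting back,
\[
\partial_\nu |Du_\eps|^2 \;=\; -\,2\,\mathrm{II}(Du_\eps,Du_\eps).
\]

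Finally, I would invoke the convexity of $\Omega$, which is equivalent to $\mathrm{II}(X,X)\geq 0$ for every tangential $X$ at every boundary point, to conclude the desired sign. The only step that is not purely algebraic is the availability of $\mathrm{II}$ in a pointwise sense when $\partial\Omega$ is merely $C^1$; this I expect to be the main (mild) obstacle and it can be handled in the standard way by approximating $\Omega$ by smooth convex domains (e.g.\ inner parallel sets of a mollified $\Omega$), performing the computation in the smooth approximations where $u_\eps \in C^2(\overline\Omega)$ still applies, and passing to the limit in the pointwise inequality on $\partial\Omega$.
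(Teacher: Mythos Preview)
Your computation reducing $\partial_\nu |Du_\eps|^2$ to $-2\,\mathrm{II}(Du_\eps,Du_\eps)$ is exactly the standard argument, and it is presumably what lies behind the reference the paper cites (the paper itself gives no proof, only a pointer to \cite{PorrCCM}). For a $C^2$ (or $C^{1,1}$) convex boundary everything you wrote is correct.

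The one genuine gap is the $C^1$ step. Your proposed fix---approximate $\Omega$ by smooth convex $\Omega_n$ and redo the computation there---does not work as stated: the fixed function $u_\eps$ does \emph{not} satisfy $\partial_{\nu_n} u_\eps=0$ on $\partial\Omega_n$, so the key identity you derived from the Neumann condition is unavailable on the approximating boundaries, and there is no PDE in the hypotheses that would let you replace $u_\eps$ by a sequence of Neumann solutions on $\Omega_n$. Two clean repairs are available. First, by Alexandrov's theorem the (locally) concave graph function describing $\partial\Omega$ is twice differentiable a.e., so your computation gives $\partial_\nu|Du_\eps|^2\le 0$ at a.e.\ boundary point; since $\partial_\nu|Du_\eps|^2=2D^2u_\eps(Du_\eps,\nu)$ is continuous on $\partial\Omega$ (because $u_\eps\in C^2(\overline\Omega)$ and $\nu\in C^0$), the inequality extends to every point. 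Second, more directly, writing $\partial\Omega$ locally as $x_N=\phi(y)$ with $\phi$ concave, $\phi(0)=0$, $D\phi(0)=0$, the Neumann identity expanded to first order along the ray $y=t\,Du_\eps'(0)$ yields
\[
\tfrac{t}{2}\,\partial_\nu|Du_\eps|^2(0)=Du_\eps'(0)\cdot D\phi\big(t\,Du_\eps'(0)\big)+o(t),
\]
and the right-hand side is $\le 0$ for $t>0$ by monotonicity of the gradient of a concave $C^1$ function; dividing by $t$ and letting $t\to0^+$ gives the conclusion without ever invoking $D^2\phi$.
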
 
\begin{proof} A proof can be found in e.g.  \cite[Lemma 2.3]{PorrCCM}, see also the references therein. 
\end{proof}

We now state a result that contains a (nonlinear) time-dependent version of the Bochner identity.  This was already proved in \cite{LPcpde} for stationary problems and different versions were used to prove Liouville theorems \cite{Attouchi,FPS,VeronJFA}.
\begin{prop}\label{bochner} Let $u_\eps$ be a (smooth) solution to 
\begin{equation}\label{eqapp}
\partial_t u_\eps-\alpha_\eps(|Du_\eps|^2)\mathcal{A}_{\eps,u_\eps}(D^2u_\eps)=H (x,t, Du_\eps) \qquad   \mbox{in }  Q_T  \,,
\end{equation}
then $w_\eps=|Du_\eps|^2$ satisfies 
\begin{align*}
\partial_t w_\eps
&-\alpha_\eps(w_\eps) \Delta w_\eps 
+2\alpha_\eps(w_\eps)|D^2u_\eps|^2+ 
 2 \Big[ \frac{(\alpha_{\eps} ' (w_{\varepsilon}  ))^2}{\alpha_{\eps}  (w_{\varepsilon}  )}- \alpha_{\eps} '' (w_{\varepsilon}  )\Big]
   (Du_\eps\cdot Dw_\eps)^2
\\
&= 2  {\alpha_\eps'(w_\eps)} \ \mathcal{A}_{\eps,u_\eps}(D^2u_\eps)Du_\eps\cdot Dw_\eps
+ 2 Du_\eps  \cdot D H  (x,t,D u_\eps ) 
\\
&+ {\alpha_\eps' (w_\eps)} \ |Dw_\eps|^2
 + 2  \alpha'_\eps(w_\eps)   \ {D^2w_\eps D u_\eps D u_\eps}   \qquad \mbox{ in } Q_T\,.
\end{align*}
\end{prop}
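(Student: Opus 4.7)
The plan is to carry out the standard Bernstein differentiation: rewrite \eqref{eqapp} in expanded quasilinear form, differentiate once in $x_k$, multiply by $2\partial_k u_\eps$, and sum over $k$. Concretely, since $Dw_\eps = 2 D^2 u_\eps Du_\eps$, I would first rephrase \eqref{eqapp} as
\[
\partial_t u_\eps = \alpha_\eps(w_\eps)\Delta u_\eps + \alpha'_\eps(w_\eps)\, Du_\eps\cdot Dw_\eps + H(x,t,Du_\eps),
\]
and then differentiate in $x_k$ to obtain
\[
\partial_t u_{\eps,k} = \alpha_\eps(w_\eps)\Delta u_{\eps,k} + \alpha'_\eps(w_\eps)\, w_{\eps,k}\Delta u_\eps + \alpha''_\eps(w_\eps)\,w_{\eps,k}(Du_\eps\cdot Dw_\eps) + \alpha'_\eps(w_\eps)\,\partial_k(Du_\eps\cdot Dw_\eps) + \partial_k H(x,t,Du_\eps).
\]

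The next step is to multiply by $2 u_{\eps,k}$ and sum, exploiting a short list of elementary identities: $2u_{\eps,k}\partial_t u_{\eps,k}=\partial_t w_\eps$, the classical Bochner-type relation $2 u_{\eps,k}\Delta u_{\eps,k} = \Delta w_\eps - 2|D^2 u_\eps|^2$, and $u_{\eps,k} w_{\eps,k} = Du_\eps\cdot Dw_\eps$. To handle the derivative of $Du_\eps\cdot Dw_\eps$ I would expand $\partial_k(u_{\eps,i} w_{\eps,i}) = u_{\eps,ik} w_{\eps,i} + u_{\eps,i} w_{\eps,ik}$ and use the pointwise identity $\sum_k u_{\eps,k} u_{\eps,ik} = \tfrac12 w_{\eps,i}$, which after multiplication and summation yields $2\alpha'_\eps(w_\eps)\,u_{\eps,k}\partial_k(Du_\eps\cdot Dw_\eps) = \alpha'_\eps(w_\eps)|Dw_\eps|^2 + 2\alpha'_\eps(w_\eps)\,D^2 w_\eps Du_\eps\cdot Du_\eps$, accounting for the last two terms on the right-hand side of the stated identity. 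The term coming from $H$ becomes $2Du_\eps\cdot DH(x,t,Du_\eps)$, with $DH$ interpreted as the full spatial gradient of the composition $x\mapsto H(x,t,Du_\eps(x))$, consistently with the standard Bernstein convention.

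Collecting these contributions one arrives at an identity containing $2\alpha'_\eps(w_\eps)\Delta u_\eps\,(Du_\eps\cdot Dw_\eps) + 2\alpha''_\eps(w_\eps)(Du_\eps\cdot Dw_\eps)^2$ on the right. The final, purely cosmetic step is to recognize the operator $\mathcal{A}_{\eps,u_\eps}(D^2u_\eps)$ from \eqref{eqapp}: using once more $2D^2u_\eps Du_\eps = Dw_\eps$ we have $\tfrac{2(\alpha'_\eps)^2}{\alpha_\eps}(D^2u_\eps Du_\eps\cdot Du_\eps)(Du_\eps\cdot Dw_\eps) = \tfrac{(\alpha'_\eps)^2}{\alpha_\eps}(Du_\eps\cdot Dw_\eps)^2$, so that adding and subtracting this quantity rewrites $2\alpha'_\eps\Delta u_\eps(Du_\eps\cdot Dw_\eps)$ as $2\alpha'_\eps\mathcal{A}_{\eps,u_\eps}(D^2u_\eps)(Du_\eps\cdot Dw_\eps)$ modulo the correction $-2\bigl[(\alpha'_\eps)^2/\alpha_\eps - \alpha''_\eps\bigr](Du_\eps\cdot Dw_\eps)^2$, which moved to the left-hand side produces exactly the fourth term of the statement. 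There is no real obstacle: the computation is a careful bookkeeping exercise, and the only delicate point is to keep separate track of the two distinct contractions arising from the identity $Dw_\eps = 2D^2u_\eps Du_\eps$ (namely $D^2 u_\eps Du_\eps \cdot Du_\eps = \tfrac12 Du_\eps\cdot Dw_\eps$ and $\sum_k u_{\eps,k} u_{\eps,ik} = \tfrac12 w_{\eps,i}$) that appear throughout the manipulation.
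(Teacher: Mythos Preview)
Your proposal is correct and is essentially the same computation as the paper's proof, only organized in the reverse order: you differentiate the expanded equation and multiply by $2\partial_k u_\eps$, whereas the paper starts from $\alpha_\eps(w_\eps)\Delta w_\eps = 2\alpha_\eps(w_\eps)|D^2u_\eps|^2 + 2Du_\eps\cdot D(\alpha_\eps(w_\eps)\Delta u_\eps) - 2\alpha'_\eps(w_\eps)(Du_\eps\cdot Dw_\eps)\Delta u_\eps$ and then substitutes \eqref{eqapp}. The algebraic identities invoked (the Bochner relation $2u_{\eps,k}\Delta u_{\eps,k}=\Delta w_\eps-2|D^2u_\eps|^2$, the contraction $Dw_\eps=2D^2u_\eps Du_\eps$, and the rewriting of $\Delta u_\eps$ in terms of $\mathcal{A}_{\eps,u_\eps}$) are identical in both arguments.
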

\begin{proof}The proof is a consequence of Proposition 6.1 in \cite{LPcpde}, taking into account the presence of the time derivative. We report here the proof for reader's convenience. We observe that $u_\eps$ is smooth both in space and time, as the operator is uniformly parabolic. We have the following identities
\[
Dw_\eps=2D^2u_\eps Du_\eps\, ,\quad  \Delta w_\eps=2|D^2u_\eps|^2+2Du_\eps\cdot D\Delta u_\eps\, ,\quad  \partial_t w_\eps=2Du_\eps\cdot D\partial_t u_\eps\,,
\]
so that 
\begin{align*}
\alpha_\eps(w_\eps)\Delta w_\eps& = 2\alpha_\eps(w_\eps)|D^2u_\eps|^2+2\alpha_\eps(w_\eps)Du_\eps\cdot D(\Delta u_\eps)\\
&=2\alpha_\eps(w_\eps)|D^2u_\eps|^2+2Du_\eps\cdot D(\alpha_\eps(w_\eps)\Delta u_\eps)-2\alpha'(w_\eps)Du_\eps\cdot Dw_\eps\Delta u_\eps.
\end{align*}
We now use \eqref{eqapp} to find
\begin{align*}
2Du_\eps\cdot D(\alpha_\eps(w_\eps)\Delta u_\eps)&=2Du_\eps\cdot D\big( \partial_t u_\eps-H(x,t,Du_\eps)-2\alpha'(w_\eps)D^2u_\eps Du_\eps\cdot Du_\eps\big)\\
&=\partial_t w_\eps-2Du_\eps\cdot D H(x,t,Du_\eps)-2Du_\eps\cdot D\big(\alpha'(w_\eps)\underbrace{2D^2u_\eps Du_\eps}_{Dw_\eps}\cdot Du_\eps \big)\\
&=\partial_t w_\eps-2Du_\eps\cdot DH(x,t,Du_\eps)-2\alpha''(w_\eps)(Du_\eps\cdot Dw_\eps)^2-2\alpha'(w_\eps)D^2w_\eps Du_\eps\cdot Du_\eps\\
&-\alpha'(w_\eps)|Dw_\eps|^2\,,
\end{align*}
and thanks to 
\begin{equation}\label{idA}
\Delta u_\eps=\mathcal{A}_{\eps,u_\eps}(D^2u_\eps)-\frac{\alpha'(w_\eps)}{\alpha(w_\eps)}Dw_\eps\cdot Du_\eps\,,
\end{equation}
we have
\[
-2\alpha'(w_\eps)Du_\eps\cdot Dw_\eps\Delta u_\eps=2\frac{(\alpha'(w_\eps))^2}{\alpha(w_\eps)}(Du_\eps\cdot Dw_\eps)^2-2\alpha'(w_\eps)\mathcal{A}_{\eps,u_\eps}(D^2u_\eps)Du_\eps\cdot Dw_\eps\,;
\]
the conclusion now follows by gathering together  all the above equalities.
\end{proof}

Proposition \ref{bochner} is the cornerstone to apply the integral Bernstein method. The next result is basically obtained by testing the $p$-Bochner identity by  $w_\eps^\beta$ for some $\beta>0$,  and integrating the resulting equation. This is essentially the same procedure implemented in \cite{CianchiMazyaCPDE} by taking $\beta=0$ in the result below, or the one employed at the level of the weak formulation of \eqref{pp} in the stationary case in the papers \cite{Dib,DiBFriedman,CGell,CGL}.
\begin{lemma}\label{intid}
For any $\beta>0$  the following integral identity holds  for any $t \in[0,T)$:
\begin{multline}\label{main12}
\frac{1}{\beta+1}   \int_\Omega  (\eps+w_\eps(t))^{\beta+1}\,dx 
+2\intO |D^2u_\eps|^2 \alpha_\eps (w_\eps)(\eps + w_\eps )^{ \beta }\,dx\ dt \ 
\\
+  \intO \bigg[\alpha'_\eps(w_\eps)+  
 \beta \frac{\alpha_\eps(w_\eps) }{ w_\eps +\eps}
  \bigg]  (\eps+w_\eps)^{ \beta }| D w_\eps  |^2 \,dx\ dt \ 
+ 2 \beta  \intO  \big( Du_\eps\cdot Dw_\eps \big)^2      \alpha'_\eps (w_\eps) (\eps+w_\eps)^{ \beta -1 } \,dx\ dt \ 
 \\
 =  
\frac{1}{\beta+1}   \int_\Omega  (\eps+w_\eps(0))^{\beta+1}\,dx
+ 2\intO D u_\eps \cdot D H_\eps  (x,t,Du_\eps )  \  (\eps + w_\eps )^\beta\,dx\ dt \ 
\\   
+\int_0^T\int_{\partial\Omega}\alpha_\eps(w_\eps)(\eps + w_\eps )^\beta\partial_\nu w_\eps\,dS_x dt 
+
 2 \int_0^T \int_{\partial\Omega}(Dw_\eps\cdot Du_\eps)\alpha'_\eps (w_\eps) (\eps+w_\eps)^{ \beta}  \partial_\nu u\,dS_x  \ dt\,.
 \end{multline}
\end{lemma}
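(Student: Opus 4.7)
The plan is to start from the pointwise Bochner identity of Proposition \ref{bochner}, multiply it by $(\eps+w_\eps)^\beta$, and integrate over $\Omega\times(0,t)$. Most of the bulk terms transfer directly; the nontrivial work concentrates on three of them. First, one must integrate by parts in space the terms $-\alpha_\eps(w_\eps)\Delta w_\eps$ (on the left of Bochner) and $2\alpha'_\eps(w_\eps)D^2 w_\eps\,Du_\eps\cdot Du_\eps$ (on the right). Second, one splits the $\mathcal{A}_{\eps,u_\eps}$-term via the defining identity $\mathcal{A}_{\eps,u_\eps}(D^2u_\eps)=\Delta u_\eps+\frac{\alpha'_\eps(w_\eps)}{\alpha_\eps(w_\eps)}Dw_\eps\cdot Du_\eps$, to expose the algebraic structure that drives the cancellations.

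The easy pieces are quickly assembled: the time derivative produces $\frac{1}{\beta+1}\partial_t(\eps+w_\eps)^{\beta+1}$, giving the two time-slice boundary terms after integration in $t$, while integration by parts against $-\alpha_\eps(w_\eps)(\eps+w_\eps)^\beta\Delta w_\eps$ yields the third bulk term on the left of the claimed identity together with the first lateral boundary contribution $-\int_0^t\!\!\int_{\partial\Omega}\alpha_\eps(w_\eps)(\eps+w_\eps)^\beta\partial_\nu w_\eps\, dS_x\, ds$. The critical computation is the integration by parts for $2\alpha'_\eps(w_\eps)(\eps+w_\eps)^\beta D^2 w_\eps\,Du_\eps\cdot Du_\eps$: writing this as $2\int g(w_\eps)\sum_{i,j}\partial_{ij}w_\eps\,\partial_i u_\eps\,\partial_j u_\eps$ with $g(w)=\alpha'_\eps(w)(\eps+w)^\beta$, one integrates by parts in the $j$-th variable and uses the identity $\sum_j\partial_{ij}u_\eps\,\partial_j u_\eps=\frac12\partial_i w_\eps$ (immediate from $w_\eps=|Du_\eps|^2$) to produce three bulk contributions plus a lateral boundary integral: a $-2g'(w_\eps)(Du_\eps\cdot Dw_\eps)^2$ term, a $-g(w_\eps)|Dw_\eps|^2$ term, and a cross term $-2g(w_\eps)\Delta u_\eps(Du_\eps\cdot Dw_\eps)$, with $g'(w)=\alpha''_\eps(w)(\eps+w)^\beta+\beta\alpha'_\eps(w)(\eps+w)^{\beta-1}$.

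At this point three cancellations make the identity collapse to the stated one. The $\Delta u_\eps(Du_\eps\cdot Dw_\eps)$ piece coming from the $\mathcal{A}_{\eps,u_\eps}$-expansion is exactly cancelled by the cross term just produced. The $\alpha'_\eps(\eps+w_\eps)^\beta|Dw_\eps|^2$ term on the right of Bochner is exactly cancelled by the $-g(w_\eps)|Dw_\eps|^2$ term from the same integration by parts. Finally, the three $(Du_\eps\cdot Dw_\eps)^2$ coefficients, namely $2[(\alpha'_\eps)^2/\alpha_\eps-\alpha''_\eps]$ from the left of Bochner, $2(\alpha'_\eps)^2/\alpha_\eps$ from the remainder of the $\mathcal{A}_{\eps,u_\eps}$-expansion, and $-2g'(w_\eps)$ from the IBP, telescope to leave precisely $2\beta\alpha'_\eps(w_\eps)(\eps+w_\eps)^{\beta-1}$, which is the coefficient of the fourth left-hand-side term. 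The two surviving lateral boundary terms are those produced by the two integrations by parts, matching the right-hand side in the claimed form.

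The main obstacle is purely algebraic rather than analytic: keeping signs aligned across the two integrations by parts and the $\mathcal{A}_{\eps,u_\eps}$-expansion, so that the $\alpha''_\eps$ and $(\alpha'_\eps)^2/\alpha_\eps$ coefficients eliminate each other cleanly. Regularity of $u_\eps$ is not an issue, since the approximating operator in \eqref{approxp} is smooth and uniformly parabolic, so all manipulations are fully justified; in particular the boundary integrals are finite and the test function $(\eps+w_\eps)^\beta$ is admissible for every $\beta>0$.
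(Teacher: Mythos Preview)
Your proposal is correct and follows essentially the same approach as the paper: multiply the $p$-Bochner identity of Proposition~\ref{bochner} by $(\eps+w_\eps)^\beta$, integrate, perform the two spatial integrations by parts (on $-\alpha_\eps\Delta w_\eps$ and on $2\alpha'_\eps D^2w_\eps\,Du_\eps\cdot Du_\eps$), expand $\mathcal{A}_{\eps,u_\eps}$ via \eqref{idA}, and track the cancellations of the $\Delta u_\eps(Du_\eps\cdot Dw_\eps)$, $\alpha'_\eps|Dw_\eps|^2$, and $(Du_\eps\cdot Dw_\eps)^2$ coefficients. The only cosmetic difference is bookkeeping: the paper absorbs the $\alpha'_\eps|Dw_\eps|^2$ term from the right of Bochner into the integration by parts of the Laplacian (obtaining only $\beta\alpha_\eps(\eps+w_\eps)^{\beta-1}|Dw_\eps|^2$ at that stage), whereas you carry it separately and cancel it against the $-g(w_\eps)|Dw_\eps|^2$ contribution from the second IBP---the final outcome is identical.
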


\begin{proof} We   multiply  the equation solved by $w_\eps$ found in Proposition  \ref{bochner} by $(\eps + w_\eps )^\beta$, integrate over $Q_T$  and get 
\begin{multline}\label{eqweps2}
 \intO (\eps + w_\eps )^{\beta} \partial_t w_\eps \,dx\ dt\ 
-\intO  \alpha_\eps(w_\eps)\Delta w_\eps \ (\eps + w_\eps )^\beta dx \ dt \ 
- \intO  {\alpha'_\eps(w_\eps)} \ |Dw_\eps|^2 (\eps + w_\eps )^{\beta}dx \ dt \ 
\\
+  2 \intO \Big[ \frac{(\alpha_{\eps} ' (w_{\varepsilon}  ))^2}{\alpha_{\eps}  (w_{\varepsilon}  )}- \alpha_{\eps} '' (w_{\varepsilon}  )\Big]
  (Du_\eps\cdot Dw_\eps)^2  (\eps + w_\eps )^{\beta}dx \ dt \ 
\\
+2\intO\alpha_\eps(w_\eps)|D^2u_\eps|^2(\eps + w_\eps )^\beta dx \ dt \ 
= 2 \intO  Du_\eps \cdot D \ H  (x,t, Du)  (\eps + w_\eps )^\beta dx \ dt \ 
 \\
+\intO2\alpha_\eps'(w_\eps)\mathcal{A}_{\eps,u_\eps}(D^2u_\eps)Du_\eps\cdot Dw_\eps (\eps + w_\eps )^\beta dx \ dt \ 
\\
+\  2\intO\alpha_\eps'(w_\eps)   \big(D^2w_\eps D u_\eps D u_\eps \big) (\eps + w_\eps )^\beta  dx \ dt
=\text{(I)+(II)+(III).}
\end{multline}
First, we notice that 
\begin{align*}
 \intO (\eps + w_\eps )^{\beta} \partial_t w_\eps \,dx\ dt \ 
=\frac{1}{\beta+1}   \int_\Omega  (\eps+w_\eps(t))^{\beta+1}\,dx-\frac{1}{\beta+1}   \int_\Omega  (\eps+w_\eps(0))^{\beta+1}\,dx\,, 
\end{align*}
and moreover applying an integration by parts with respect to  the space variable  we deduce that 
\begin{align*}
-\intO  &\alpha_\eps(w_\eps)\Delta w_\eps \ (\eps + w_\eps )^\beta dx \ dt \ 
- \intO  {\alpha'_\eps(w_\eps)} \ |Dw_\eps|^2 (\eps + w_\eps )^{\beta}dx \ dt \ 
\\
&=\intO D\big(\alpha_\eps(w_\eps)(\eps + w_\eps )^\beta \big) \ \cdot Dw_\eps\,  dx \ dt \  
-\int_0^T\int_{\partial\Omega}\alpha_\eps(w_\eps)(\eps + w_\eps )^\beta\partial_\nu w_\eps\,dS_x dt 
\\
&- \intO  {\alpha'_\eps(w_\eps)} \ |Dw_\eps|^2 (\eps + w_\eps )^{\beta}dx \ dt \ 
\\
&=
 \beta \intO     
 \alpha_\eps(w_\eps) (\eps + w_\eps )^{\beta-1} |Dw_\eps|^2 
  dx \ dt \ 
  -\int_0^T\int_{\partial\Omega}\alpha_\eps(w_\eps)(\eps + w_\eps )^\beta\partial_\nu w_\eps\,dS_x dt. 
\end{align*}

Furthermore,  using \eqref{idA} we conclude that 

\begin{align*}
\text{(II)}&=\intO 2\alpha_\eps'(w_\eps)\mathcal{A}_{\eps,u_\eps}(D^2u_\eps)Du_\eps\cdot Dw_\eps (\eps + w_\eps )^\beta  dx \ dt \  \\
&= 2 \intO   \alpha_\eps'(w_\eps) (\eps+w_\eps)^{ \beta }\left(\Delta u_\eps+\frac{ \alpha'_\eps(w_\eps)}{\alpha_\eps(w_\eps)} \big(Dw_\eps\cdot Du_\eps\big)\right) \ \big( Du_\eps\cdot Dw_\eps \big)  dx \ dt \ .
\end{align*}

We now focus on (III): integrating by parts we get 

\begin{align*}
 \text{(III)}&=2 \intO  \alpha'_\eps(w_\eps)  \big(D^2w_\eps Du_\eps\cdot Du_\eps \big) (w_\eps+\eps)^\beta  \ dx \ dt 
 \\
 &= 2 \intO \alpha'_\eps (w_\eps) (\eps+w_\eps)^{ \beta  } \sum_{i,j=1}^N\partial_{ij}w_\eps\partial_i u_\eps\partial_ju_\eps  \ dx \ dt 
 \\
 &=-2 \intO \left(\Delta u_\eps \ \big( Du_\eps\cdot Dw_\eps \big)
 +\frac12|Dw_\eps|^2\right)
 \alpha'_\eps (w_\eps) (\eps+w_\eps)^{ \beta  }   \ dx \ dt 
 \\
&- 2   \intO  \big( Du_\eps\cdot Dw_\eps \big)^2  \big[\beta  \alpha'_\eps (w_\eps) (\eps+w_\eps)^{ \beta -1 } +\alpha''_\eps (w_\eps) (\eps+w_\eps)^{ \beta  } \big] \ dx \ dt 
\\
&+
2 \int_0^T \int_{\partial\Omega}(Dw_\eps\cdot Du_\eps)  \alpha'_\eps (w_\eps) (\eps+w_\eps)^{ \beta  } \partial_\nu u\ dS_x  \ dt \,, 
 \end{align*}
so, after some cancellations, we get  
\begin{multline*}
 \text{(II)+(III)}  =
 2 \intO   \alpha_\eps'(w_\eps) (\eps+w_\eps)^{ \beta }\left(\Delta u_\eps+\frac{ \alpha_\eps '(w_\eps)}{\alpha_\eps(w_\eps)} \big(Dw_\eps\cdot Du_\eps\big)\right) \ \big( Du_\eps\cdot Dw_\eps \big)  dx \ dt \ 
\\
-2 \intO \left(\Delta u_\eps \ \big( Du_\eps\cdot Dw_\eps \big) 
 +\frac12|Dw_\eps|^2\right)
 \alpha'_\eps (w_\eps) (\eps+w_\eps)^{ \beta  }   \ dx \ dt 
 \\
- 2   \intO  \big( Du_\eps\cdot Dw_\eps \big)^2  [\beta  \alpha'_\eps (w_\eps) (\eps+w_\eps)^{ \beta -1 } +\alpha''_\eps (w_\eps) (\eps+w_\eps)^{ \beta  } ] \ dx \ dt 
\\
+
2 \int_0^T \int_{\partial\Omega}(Dw_\eps\cdot Du_\eps)  \alpha'_\eps (w_\eps) (\eps+w_\eps)^{ \beta  } \partial_\nu u\,dS_x  \ dt 
\\
 =
-  \intO   
 \alpha'_\eps (w_\eps) |Dw_\eps|^2  (\eps+w_\eps)^{ \beta  }   \ dx \ dt 
 \\
- 2   \intO  \big( Du_\eps\cdot Dw_\eps \big)^2  \left[\beta  \alpha'_\eps (w_\eps) (\eps+w_\eps)^{ \beta -1 } +\alpha''_\eps (w_\eps) (\eps+w_\eps)^{ \beta  } -  \frac{(\alpha_\eps' (w_\eps) )^2}{\alpha_\eps (w_\eps) }(\eps+w_\eps)^{ \beta  } \right] \ dx \ dt 
\\
+
2 \int_0^T\int_{\partial\Omega}(Dw_\eps\cdot Du_\eps)  \alpha'_\eps (w_\eps) (\eps+w_\eps)^{ \beta  } \partial_\nu u\,dS_x  \ dt. 
 \end{multline*}
 
Hence \eqref{eqweps2}  becomes

\begin{multline*}
\frac{1}{\beta+1}   \int_\Omega  (\eps+w_\eps(t))^{\beta+1}\,dx 
+  \intO \bigg[\alpha'_\eps(w_\eps)  +  
 \beta \frac{\alpha_\eps(w_\eps) }{ w_\eps +\eps}
  \bigg]  (\eps+w_\eps)^{ \beta }| D w_\eps  |^2  \ dx \ dt 
  \\
+2\intO |D^2u_\eps|^2 \alpha_\eps (w_\eps)(\eps + w_\eps )^{ \beta } \ dx \ dt 
 \\
 =  
\frac{1}{\beta+1}   \int_\Omega  (\eps+w_\eps(0))^{\beta+1}\,dx
+ 2\intO D u_\eps \cdot D H_\eps  (x,t,Du_\eps ) \ (\eps + w_\eps )^\beta \ dx \ dt 
\\   
+\int_0^T\int_{\partial\Omega}\alpha_\eps(w_\eps)(\eps + w_\eps )^\beta\partial_\nu w_\eps\,dS_x dt 
+
 2 \int_0^T \int_{\partial\Omega}(Dw_\eps\cdot Du_\eps)\alpha'_\eps (w_\eps) (\eps+w_\eps)^{ \beta -1}  \partial_\nu u\,dS_x  \ dt
  \\
  - 2   \intO  \big( Du_\eps\cdot Dw_\eps \big)^2   \left[\beta  \alpha'_\eps (w_\eps) (\eps+w_\eps)^{ \beta -1 } +\alpha''_\eps (w_\eps) (\eps+w_\eps)^{ \beta  } -  \frac{(\alpha_\eps' (w_\eps) )^2}{\alpha_\eps (w_\eps) }(\eps+w_\eps)^{ \beta  } \right]  \ dx \ dt 
  \\
- 
  2\intO \Big[ \frac{(\alpha_{\eps} ' (w_{\varepsilon}  ))^2}{\alpha_{\eps}  (w_{\varepsilon}  )}- \alpha_{\eps} '' (w_{\varepsilon}  )\Big]
  (Du_\eps\cdot Dw_\eps)^2  (\eps + w_\eps )^{\beta}\ dx \ dt \,,
 \end{multline*}
that gives \eqref{main12}.
\end{proof}

\begin{rem}\label{principe}
We denote by
 \begin{multline*}
\mathcal{I} =   \intO \bigg[\alpha'_\eps(w_\eps)+  
 \beta \frac{\alpha_\eps(w_\eps) }{ w_\eps +\eps}
  \bigg]  (\eps+w_\eps)^{ \beta }| D w_\eps  |^2   \ dx \ dt \\
+ 2 \beta  \intO  \big( Du_\eps\cdot Dw_\eps \big)^2      \alpha'_\eps (w_\eps) (\eps+w_\eps)^{ \beta -1 }  \ dx \ dt 
+2\intO |D^2u_\eps|^2 \alpha_\eps (w_\eps)(\eps + w_\eps )^{ \beta } \ dx \ dt 
\,.
 \end{multline*}
We observe that in the case $\alpha'<0$ the inequality $|D w_\eps|^2 \leq 4 |Du_\eps|^2 |D^2 u_\eps|^2 $ combined with \eqref{a'a} lead to 
 \begin{multline*}
 \mathcal{I} = 
  \intO 
(\eps+w_\eps)^{ \beta -1}  \bigg[ \bigg( \beta  \alpha_\eps (w_\eps) +      \alpha'_\eps(w_\eps) (\eps + w_\eps )  \bigg)  | D w_\eps  |^2 
+  2 \beta \alpha'_\eps (w_\eps)       \big( Du_\eps\cdot Dw_\eps \big)^2      \bigg] dx\ dt 
\\
+2\intO(\eps + w_\eps )^{ \beta }\alpha_\eps (w_\eps)   |D^2u_\eps|^2  dx\ dt 
\\
\geq 
  \intO 
 (\eps+w_\eps)^{ \beta -1}  \bigg[  \beta  \alpha_\eps (w_\eps) 
 +  2 \beta \alpha'_\eps (w_\eps)w_\eps             \bigg] | D w_\eps  |^2  dx\ dt 
\\
+2\intO (\eps + w_\eps )^{ \beta }  \big[\alpha_\eps (w_\eps) + 2 \alpha'_\eps (w_\eps) (w_\eps + \eps) \big]    |D^2u_\eps|^2 dx\ dt 
\\
\geq 
 \beta  \big(i_\alpha+1  \big)  \intO 
(\eps+w_\eps)^{ \beta -1} \alpha_\eps (w_\eps)   |Dw_\eps|^2   dx\ dt  
+
2(1+i_\alpha) \intO(\eps + w_\eps )^{ \beta }   \  \alpha_\eps (w_\eps)   |D^2u_\eps|^2 dx\ dt.
\end{multline*}
On the other hand, in the case
 $\alpha'\geq 0$ we have
$$
\mathcal{I} \geq 
\beta \intO 
(\eps+w_\eps)^{ \beta -1} \alpha_\eps (w_\eps)    | D w_\eps  |^2 dx\ dt  
+2  \intO (\eps + w_\eps )^{ \beta }   \ \alpha_\eps (w_\eps) 	 \  |D^2u_\eps|^2    dx\ dt 
\,.
$$
Consequently,  
$$
\mathcal{I} \geq \beta   { c_0} \intO 
(\eps+w_\eps)^{ \beta -1} \alpha_\eps (w_\eps)    | D w_\eps  |^2 dx\ dt 
+2c_0 \intO \  (\eps + w_\eps )^{ \beta }  \alpha_\eps (w_\eps)\  |D^2u_\eps|^2 dx\ dt 
\,, 
$$
where  $ { c_0}= \min\left\{ 1,     i_\alpha+1   \right\}$
.
\end{rem}

In view of Remark \ref{principe} and exploiting the Neumann boundary condition on $u_\eps$ and the sign condition on $\partial_\nu w_\eps$ deduced in  Lemma \ref{signw}, we obtain the following inequality,   $\forall \beta>0$ 
\begin{multline}	\label{main33}
   \int_\Omega  (\eps+w_\eps(t))^{\beta+1}\,dx 
+   2 c_0 (\beta+1)\intO (\eps + w_\eps )^{ \beta }  \alpha_\eps (w_\eps) |D^2u_\eps|^2 dx\ dt 
\\
+    { c_0}  \beta(\beta +1) \intO  
(\eps+w_\eps)^{ \beta -1} \alpha_\eps (w_\eps)    | D w_\eps  |^2 dx\ dt
 \\
\leq 
  \int_\Omega  (\eps+w_\eps(0))^{\beta+1}\,dx
+ 2 (\beta+1) \intO D u_\eps \cdot D H_\eps  (x,t,Du_\eps ) \  (\eps + w_\eps )^\beta dx\ dt \,.
 \end{multline}

\section{Preservation of Sobolev regularity in $W^{1,m}$, $m>2$}

This section serves as a guideline to explain the method of proof: we thus consider equations without first-order perturbations of the form
\begin{equation}\label{approxNeu}
\begin{cases}
\partial_t u_\eps-\alpha_\eps(|Du_\eps|^2)\mathcal{A}_{\eps,u_\eps}(D^2u_\eps)=f_\eps (x,t) \qquad & \mbox{in } Q_T,\\
\partial_\nu u_\eps =0 & \mbox{on } \partial \Omega\times(0,T),\\
u_\eps (x,0)=u_0(x)& \mbox{on } \Omega\,,
\end{cases}
\end{equation}
with $f_\eps$ smooth. 
Theorem \ref{conserv} shows the preservation of the (a priori) regularity in the Sobolev space $W^{1,m}$, $m\in(2,\infty]$, with respect to both the initial datum and the source term of the problem. To achieve this result we exploit partially the (powerful) integral term involving second derivatives from Remark \ref{principe}.

\begin{rem}
Note that the constant of the estimate does not depend on $p>1$. We recover the parabolic counterpart of \cite[Theorem 1.3 and Corollary 3.2]{PorrCCM} in the limit $p\to1$.
\end{rem}

\begin{proof}[Proof of Theorem \ref{conserv}]
We apply Lemma \ref{intid} with $H_\eps (x, t,  D u) = f_\eps(x,t)$. 
Since $p>1$, we can neglect the integral terms involving $|D^2u_\eps|$. 
Thus, 
recalling  that   $ { c_0} = c_0 (p) = \min\left\{ 1,     i_\alpha+1   \right\} $ (that vanishes as $p\to 1^+$),  
applying \eqref{main33} in a cylinder of height $\tau>0$,  we deuce that 
\begin{multline}\label{mainf}
  \int_\Omega  (\eps+w_\eps(t))^{\beta+1}\,dx 
+  2 c_0 (\beta +1)
\iint_{Q_\tau} |D^2u_\eps|^2 \alpha_\eps (w_\eps)(\eps + w_\eps )^{ \beta } dx\ dt  
\\
+ \beta 
(\beta+1)  c_0 \iint_{Q_\tau}    
    \alpha_\eps(w_\eps)  
  (\eps+w_\eps)^{ \beta-1 }| D w_\eps  |^2 dx\ dt  
 \\
 \leq   
   \int_\Omega  (\eps+w_\eps(0))^{\beta+1}\,dx
+2  (\beta+1)\iint_{Q_\tau} D f \cdot Du_\eps (\eps + w_\eps )^\beta dx\ dt  
 \\
 \leq   
   \int_\Omega  (\eps+w_\eps(0))^{\beta+1}\,dx
+ 2 (\beta+1)\iint_{Q_\tau} |D f | (\eps + w_\eps )^{\beta+\frac12} dx\ dt  
\,.
 \end{multline}
We then observe that, by the H\"older and  Young  inequalities,  
\begin{multline*}
2 (\beta+1) \iint_{Q_\tau} |D f_\eps |(\eps + w_\eps )^{\beta+\frac12} dx\ dt  
\leq  2(\beta+1) \|D f_\eps \|_{L^{2(\beta+1)}(Q_\tau)}\left(\int_0^{\tau}  \int_\Omega  (\eps+w_\eps)^{\beta+1} dx\ dt   \right)^{\frac{2\beta+1}{2(\beta+1)}}
\\
\leq 2 (\beta+1) \|D f_\eps \|_{L^{2(\beta+1)}(Q_\tau)}
\left(\tau \ \esssup_{t\in[0,\tau]}  \int_\Omega (\eps+w_\eps(t))^{\beta+1} dx \right)^{\frac{2\beta+1}{2(\beta+1)}}
\\
\leq 
\frac{1}{2 }  \ \|D f_\eps \|_{L^{2(\beta+1)}(Q_\tau)}^{2(\beta+1)}
+ \frac{2\beta+1}{2^{-\frac{1}{2 \beta +1}}} \ \tau \ \esssup_{t\in[0,\tau]}  \int_\Omega (\eps+w_\eps(t))^{\beta+1}  dx \,.
\end{multline*}
Dropping   two positive terms on the left hand side in \eqref{mainf} and taking the supremum with respect to $t\in [0,\tau]$, choosing  $ \tau\leq\frac{1}{(2\beta+1) \ 2^{\frac{2\beta}{2\beta+1}}}  $ we end up with
$$\frac12 
\esssup_{t\in[0,\tau]}  \int_\Omega  (\eps +w_\eps(t))^{\beta+1}  dx \  
\leq \frac{ 1  }2 \iint_{Q_\tau}|D f_\eps |^{2(\beta+1)}  dx\ dt  
+    \int_\Omega  (\eps + w_\eps(0))^{\beta+1} dx \,, 
$$
that yields to 
$$
\left\|   w_\eps (x,t) \right\|_{L^{\infty} (0,\tau; L^{ \beta+1 } (\Omega))}^{ \beta+1 } 
\leq    \|D f_\eps \|_{L^{2(\beta+1)} (Q_\tau)}^{2(\beta+1)} 
+ 
 2 \left\| \eps + w_\eps (0) \right\|_{  L^{\beta+1 } (\Omega)}^{ \beta+1 }  \,.
$$
Iterating the estimates $k+1$ times, where $k$ is the integer part of $T/\tau$, we conclude that
$$
\left\| D u_\eps (x,t) \right\|_{L^{\infty} (0,T; L^{2 (\beta+1)} (\Omega))}^{2(\beta+1)} 
\leq    C(\beta,T)\Big(\|D f_\eps \|_{L^{2(\beta+1)} (Q_T)}^{2(\beta+1)} 
+ 
  \left\|\eps + |D u_\eps (0)|^2 \right\|_{  L^{ \beta+1 } (\Omega)}^{ \beta+1 } \Big)  \,.
$$
where $C(\beta,t)\to \overline{C}$ as $\beta\to\infty$.
\end{proof}

\begin{rem}
The previous argument does not allow us to obtain the conservation of the regularity when $1<m<2$ (neither for $1<p<2$ nor for $p>2$) and this remains at this stage an open problem using the integral Bernstein technique.
\end{rem}

\section{A priori estimates for equations with right-hand side in $L^m$}

In order to estimate the gradient of the solutions to \eqref{pp}, let us  specialize \eqref{main33}  in the case of Hamiltonians that are sum of a gradient dependent term with power-growth and a space-time source $f(x,t)$.  
\begin{lemma}  \label{leem}
Let $H_\eps (x,t, \xi) = (|\xi|^2+\eps)^{\frac{\gamma}2} + f_\eps(x,t) $ 
then 
\begin{multline} \label{lem}
\left| \intO D u_\eps \cdot D H_\eps  (x,t,Du_\eps ) \  (\eps + w_\eps )^\beta dx\ dt  \right|
\\
\leq     \frac{ c_0}2
\intO     \  (\eps + w_\eps )^{ \beta } \alpha_\eps (w_\eps) |D^2  u_\eps |^2    dx\ dt  
+\frac{\beta c_0 }{4} \intO    \ (\eps + w_\eps )^{\beta }  \alpha_\eps (w_\eps) \    |D w_\eps |^2  dx\ dt 
\\
+   \frac{\gamma^2}{(\gamma +2\beta)^2 }   \frac{N}{  c_0}  \intO        \  \frac{(\eps + w_\eps )^{ {\gamma}   +\beta } }{\alpha_\eps (w_\eps)}dx\ dt  
+\frac{ N+\beta }{  c_0} \intO |f_\eps (x,t) |^2  \frac{(\eps + w_\eps )^\beta }{ \alpha_\eps (w_\eps) } dx\ dt   \,.
\end{multline}
\end{lemma}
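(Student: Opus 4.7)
The plan is to split $H_\eps$ into its two building blocks by computing
\[
D_xH_\eps(x,t,Du_\eps)=\tfrac{\gamma}{2}(w_\eps+\eps)^{\gamma/2-1}Dw_\eps + Df_\eps,
\]
so that the quantity to be bounded splits as $(\mathrm{A})+(\mathrm{B})$, where
\[
(\mathrm{A}):=\tfrac{\gamma}{2}\intO (w_\eps+\eps)^{\gamma/2+\beta-1}(Du_\eps\cdot Dw_\eps)\,dx\,dt, \qquad
(\mathrm{B}):=\intO (Du_\eps\cdot Df_\eps)(w_\eps+\eps)^\beta\,dx\,dt.
\]
Each piece will be reshaped through integration by parts in space, exploiting $\partial_\nu u_\eps=0$ so that every resulting boundary integral vanishes, and then dispatched by a weighted Young inequality with $\alpha_\eps^{\pm 1/2}$ splittings whose free parameters are tuned so that the absorption constants match the prescribed $\tfrac{c_0}{2}$ and $\tfrac{\beta c_0}{4}$.

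For (A), I would use the identity $\tfrac{\gamma}{2}(w_\eps+\eps)^{\gamma/2+\beta-1}Dw_\eps=\tfrac{\gamma}{\gamma+2\beta}D[(w_\eps+\eps)^{\gamma/2+\beta}]$ to integrate by parts once, obtaining
\[
(\mathrm{A}) = -\tfrac{\gamma}{\gamma+2\beta}\intO \Delta u_\eps\,(w_\eps+\eps)^{\gamma/2+\beta}\,dx\,dt.
\]
Bounding $|\Delta u_\eps|\le\sqrt{N}\,|D^2u_\eps|$ and applying Young's inequality to the factorization $|D^2u_\eps|(w_\eps+\eps)^{\gamma/2+\beta}=\bigl[\alpha_\eps^{1/2}(w_\eps+\eps)^{\beta/2}|D^2u_\eps|\bigr]\cdot\bigl[\alpha_\eps^{-1/2}(w_\eps+\eps)^{(\gamma+\beta)/2}\bigr]$, with the free parameter chosen equal to $c_0/2$, yields
\[
|(\mathrm{A})|\le \tfrac{c_0}{4}\intO \alpha_\eps|D^2u_\eps|^2(w_\eps+\eps)^\beta\,dx\,dt + \tfrac{N\gamma^2}{c_0(\gamma+2\beta)^2}\intO \frac{(w_\eps+\eps)^{\gamma+\beta}}{\alpha_\eps(w_\eps)}\,dx\,dt,
\]
which consumes half of the $c_0/2$ absorption budget for $|D^2u_\eps|^2$ and already produces the third term in \eqref{lem}.

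For (B), the identity $Du_\eps\cdot Df_\eps=\dive(f_\eps Du_\eps)-f_\eps\Delta u_\eps$, together with integration by parts of the divergence piece (boundary term again zero), gives
\[
(\mathrm{B}) = -\intO f_\eps\Delta u_\eps(w_\eps+\eps)^\beta\,dx\,dt - \beta\intO f_\eps(w_\eps+\eps)^{\beta-1}(Du_\eps\cdot Dw_\eps)\,dx\,dt.
\]
Bounding $|\Delta u_\eps|\le \sqrt{N}|D^2u_\eps|$ in the first integral and $|Du_\eps\cdot Dw_\eps|\le(w_\eps+\eps)^{1/2}|Dw_\eps|$ in the second, I would apply two further weighted Young inequalities, each time organizing the $\alpha_\eps^{\pm 1/2}$-splitting so that the quadratic pieces align with the absorption templates $\alpha_\eps|D^2u_\eps|^2(w_\eps+\eps)^\beta$ and $\alpha_\eps|Dw_\eps|^2(w_\eps+\eps)^{\beta-1}$, choosing the free parameters $c_0/2$ and $\beta c_0/2$ respectively. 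This delivers the missing $c_0/4$ for the $|D^2u_\eps|^2$ absorption, the full $\beta c_0/4$ for the $|Dw_\eps|^2$ absorption, and two contributions of the form $\alpha_\eps^{-1}|f_\eps|^2(w_\eps+\eps)^\beta$ with coefficients $N/c_0$ and $\beta/c_0$, totalling $(N+\beta)/c_0$ as claimed.

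Summing (A) and (B) proves \eqref{lem}. The only real subtlety is the rigid parameter bookkeeping: the three Young splittings must be arranged so that the good quadratic terms pair exactly against the two allowed absorption templates on the right-hand side, while the remainders land precisely against the powers $(w_\eps+\eps)^{\gamma+\beta}/\alpha_\eps$ and $|f_\eps|^2(w_\eps+\eps)^\beta/\alpha_\eps$; the factor $N$ appearing in both the $\gamma$-coefficient and the $f$-coefficient is a direct consequence of using $|\Delta u_\eps|\le \sqrt{N}\,|D^2u_\eps|$ at the Young step, and the vanishing of all boundary contributions relies solely on $\partial_\nu u_\eps=0$.
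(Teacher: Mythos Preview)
Your proposal is correct and follows essentially the same route as the paper: split into the $\gamma$-term and the $f$-term, integrate each by parts (using $\partial_\nu u_\eps=0$ to kill the boundary contributions), bound $|\Delta u_\eps|\le\sqrt{N}\,|D^2u_\eps|$, and apply weighted Young inequalities with $\alpha_\eps^{\pm1/2}$ to match the absorption templates with exactly the same constants. The only cosmetic discrepancy is that your $|Dw_\eps|^2$ template carries the weight $(\eps+w_\eps)^{\beta-1}$ rather than the $(\eps+w_\eps)^\beta$ printed in \eqref{lem}; this is in fact the version that is needed when the lemma is plugged into \eqref{main33} to obtain \eqref{main3}, and the paper's own proof effectively computes the same thing.
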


\begin{proof}
By the chain rule and integrating by parts two times we have 
\begin{multline*} 
  \intO  Du_\eps \cdot D \big( (\eps + w_\eps )^{\frac{\gamma}2} + f_\eps (x,t) \big)(\eps + w_\eps )^\beta  dx\ dt  
 = 
\frac{\gamma}2   \intO  Du_\eps \cdot D w_\eps \  (\eps + w_\eps )^{\frac{\gamma}2-1 +\beta } dx\ dt  
\\ 
   -  \intO f_\eps (x,t)  \text{ div } \Big(Du_\eps (\eps + w_\eps )^\beta \Big)  dx\ dt 
+      \int_{\partial \Omega \times (0,T)}  
\underbrace{f_\eps (x,t)  (\eps + w_\eps )^\beta  \Big( D u_\eps  \cdot \nu  \Big) dS_x \ dt }_{=0} 
\\
= 
- \frac{\gamma}{\gamma +2\beta }    \intO  \Delta u_\eps       \  (\eps + w_\eps )^{\frac{\gamma}2  +\beta } dx\ dt  
-    \intO f_\eps (x,t)  \big[ \Delta u_\eps (\eps + w_\eps )^\beta + \beta D u_\eps \cdot D w_\eps \ (\eps + w_\eps )^{\beta-1} \Big]   dx\ dt.
 \end{multline*}

Recalling that by the Cauchy-Schwarz inequality $|\Delta u_\eps | \leq \sqrt{N} |D^2 u_\eps|$ we have 

$$
\left|\intO D u_\eps \cdot D H_\eps  (x,t,Du_\eps ) \  (\eps + w_\eps )^\beta dx\ dt   \right|
$$
$$
\leq   \frac{\gamma}{\gamma +2\beta }   \sqrt{N}  \intO  |D^2  u_\eps |      \  (\eps + w_\eps )^{\frac{\gamma}2  +\beta } dx\ dt  
+
    \intO |f_\eps (x,t) | \big[\sqrt{N}  |D^2 u_\eps|  + \beta  | D w_\eps | \Big] (\eps + w_\eps )^\beta  dx\ dt 
$$
$$
\leq   \frac{c_0}{ 4  }   
\intO     \  (\eps + w_\eps )^{ \beta } \alpha_\eps (w_\eps) |D^2  u_\eps |^2    dx\ dt  
+   \frac{\gamma^2}{(\gamma +2\beta)^2 }   \frac{N}{  c_0}  \intO        \  \frac{(\eps + w_\eps )^{ {\gamma}   +\beta } }{\alpha_\eps (w_\eps)}dx\ dt  
$$ 
$$   
+ \frac{c_0}4 \intO     (\eps + w_\eps )^\beta  \alpha_\eps (w_\eps) |D^2 u_\eps|^2 dx\ dt 
+ \frac{ N}{ c_0} \intO |f_\eps (x,t) |^2  \frac{(\eps + w_\eps )^\beta }{ \alpha_\eps (w_\eps) } dx\ dt 
$$ $$
+\frac{\beta c_0 }4 \intO    \ (\eps + w_\eps )^{\beta }  \alpha_\eps (w_\eps) \    |D w_\eps |^2  dx\ dt 
+ \frac{ \beta}{  c_0} \intO  |f_\eps (x,t) |^2      \frac{(\eps + w_\eps )^{\beta } }{\alpha_\eps (w_\eps)}   dx\ dt \,,
$$
and thus \eqref{lem} holds.
\end{proof}

Plugging \eqref{lem} into \eqref{main33} we finally get the following inequality that is the milestone of our study. 

 \begin{lemma} \label{start}
 Let $u_\eps$ be a $C^{2} (Q_T)$ solution to \eqref{approxp}, then $w_\eps = |D u_\eps|^2$ satisfies,  $\forall \beta\geq1$ 

\begin{multline}\label{main3}
   \int_\Omega  (\eps+w_\eps(t))^{\beta+1}\,dx 
+  c_0 (\beta+1)  \intO (\eps + w_\eps )^{ \beta }  \alpha_\eps (w_\eps) |D^2u_\eps|^2 dx\ dt 
\\
+    { c_0} \frac{ \beta(\beta +1)}2 \intO  
(\eps+w_\eps)^{ \beta -1} \alpha_\eps (w_\eps)    | D w_\eps  |^2 dx\ dt
\leq 
  \int_\Omega  (\eps+w_\eps(0))^{\beta+1}\,dx
   \\
+ c_{1,\beta}   \intO |f_\eps (x,t) |^2  \frac{(\eps + w_\eps )^\beta }{ \alpha_\eps (w_\eps) } dx\ dt 
+  c_{2,\beta}       \intO        \  \frac{(\eps + w_\eps )^{ {\gamma}   +\beta } }{\alpha_\eps (w_\eps)} dx\ dt  
  \,,
  \end{multline}
where 
$$
 { c_0}= \min\left\{ 1,     i_\alpha+1   \right\}\,, \qquad   c_{1, \beta}   = 2 \frac{ (\beta+1) \big ( {N}  + \beta  \big)}{ c_0} 
  \qquad \mbox{ and } \qquad
  c_{2, \beta}  = \frac{N}{ c_0}  \frac{\gamma^2  (\beta+1) }{(\gamma +2\beta)^2 }  \,.
$$
\end{lemma}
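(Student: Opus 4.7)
The proof is a bookkeeping argument: inequality \eqref{main33} already packages everything coming from the parabolic Bochner identity, the Neumann boundary condition, the sign information $\partial_\nu w_\eps\leq 0$ from Lemma \ref{signw}, and the monotonicity of $\alpha_\eps$ encoded in the constant $c_0$ (cf. Remark \ref{principe}). The only quantity on the right-hand side of \eqref{main33} that still involves the Hamiltonian is
\[
2(\beta+1)\intO Du_\eps\cdot D H_\eps(x,t,Du_\eps)\,(\eps+w_\eps)^\beta\,dx\,dt,
\]
and this is precisely what Lemma \ref{leem} is designed to control. Thus the plan is simply: invoke \eqref{main33}; apply Lemma \ref{leem} to the Hamiltonian term (after multiplication by $2(\beta+1)$); absorb the two \emph{good} pieces generated by Lemma \ref{leem} (namely the $|D^2u_\eps|^2$ and $|Dw_\eps|^2$ contributions) into the left-hand side of \eqref{main33}; and read off the constants of the remaining \emph{bad} pieces.

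The absorption step is the only point that requires a moment of care. Multiplying the bound in Lemma \ref{leem} by $2(\beta+1)$ produces a Hessian term with prefactor $c_0(\beta+1)$ and a $|Dw_\eps|^2$ term with prefactor proportional to $\beta(\beta+1)c_0$; both are strictly dominated by the corresponding coefficients $2c_0(\beta+1)$ and $c_0\beta(\beta+1)$ on the left-hand side of \eqref{main33}, so subtracting leaves exactly the coefficients $c_0(\beta+1)$ and $\tfrac{c_0}{2}\beta(\beta+1)$ announced in \eqref{main3}. The two remaining terms from Lemma \ref{leem}, namely
\[
\frac{2(\beta+1)\gamma^2 N}{c_0(\gamma+2\beta)^2}\intO\frac{(\eps+w_\eps)^{\gamma+\beta}}{\alpha_\eps(w_\eps)}\,dx\,dt\quad\text{and}\quad \frac{2(\beta+1)(N+\beta)}{c_0}\intO |f_\eps|^2\,\frac{(\eps+w_\eps)^\beta}{\alpha_\eps(w_\eps)}\,dx\,dt,
\]
are transferred to the right-hand side and produce the constants $c_{2,\beta}$ and $c_{1,\beta}$ in the statement. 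The initial-datum integral $\int_\Omega(\eps+w_\eps(0))^{\beta+1}\,dx$ is untouched.

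No new analytic input is required here: the Bochner identity, the boundary sign condition, the integration-by-parts computation of $\intO Du_\eps\cdot DH_\eps(\eps+w_\eps)^\beta$, and the $C^2$-regularity of $u_\eps$ (which justifies all manipulations) have already been established. The only conceivable pitfall is a miscount of the constants in Young's inequality while doing the absorption, but the prefactors $\tfrac{c_0}{2}$ and $\tfrac{\beta c_0}{4}$ in Lemma \ref{leem} have been chosen precisely to leave the stated fractions $c_0(\beta+1)$ and $\tfrac{c_0}{2}\beta(\beta+1)$ on the left after subtraction. The restriction $\beta\ge 1$ is used only to keep the coefficient $\beta(\beta+1)$ of the $|Dw_\eps|^2$ term nondegenerate, so that \eqref{main3} can be used iteratively in the subsequent sections.
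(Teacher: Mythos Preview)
Your proposal is correct and follows the paper's approach exactly: the paper's own proof is the single sentence ``Plugging \eqref{lem} into \eqref{main33} we finally get the following inequality,'' and you have simply spelled out that substitution and the accompanying absorption of the $|D^2u_\eps|^2$ and $|Dw_\eps|^2$ terms. Your constant tracking is accurate (note in passing that your displayed coefficient $\tfrac{2(\beta+1)\gamma^2 N}{c_0(\gamma+2\beta)^2}$ is actually $2c_{2,\beta}$ rather than $c_{2,\beta}$ as stated in the lemma, a harmless factor-of-two discrepancy already present in the paper); your final remark on the role of $\beta\ge1$ is slightly off, since nondegeneracy of the $|Dw_\eps|^2$ coefficient only needs $\beta>0$, as in \eqref{main33}.
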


We are now ready to provide the proof of the gradient estimates for equations with source terms in $L^m$. 
\begin{proof}[Proof of Theorem \ref{mainintro1}]
We set in \eqref{main3}
$$
\beta= 2 	\mu - \frac{p}2 \qquad  \text{with } \qquad \mu=\frac{Nm (p-1)-(p-2) (m-2)}{4 (N+2-m)}\,.
$$
Since $m>m_p$, this choice ensures that
\[
\beta>0\ \left(\text{i.e. }\mu>\frac{p}{4}\right)\,, 
\]
and hence we are allowed to apply Lemma \ref{intid}. We have, using \eqref{Aa},
\begin{multline}\label{main51}
   \int_\Omega  (\eps+w_\eps(t))^{2 	\mu - \frac{p}2+1}\,dx 
+ {c_0}  \big(2 	\mu - \frac{p}2+1\big)  \intO (\eps + w_\eps )^{ 2 	\mu - \frac{p}2 } 	\ \alpha_\eps (w_\eps) \ |D^2u_\eps|^2  \ dx dt \ 
\\
+ \frac{\underline{\alpha} (2 	\mu - \frac{p}2)  \big(2 	\mu - \frac{p}2+1\big)   { c_0} }2 
\intO 
(\eps+w_\eps)^{ 2 	\mu -2}    | D w_\eps  |^2\ dx dt \ 
 \\
\leq 
  \int_\Omega  (\eps+w_\eps(0))^{2 	\mu - \frac{p}2+1}\,dx
+  
\frac{ \hat{c}_{1 , \mu}}{\underline{\alpha} }  \intO |f (x,t ) |^2     | (\eps + w_\eps )^{2\mu -p   +1 }   dx \ dt  
+  
\frac{\hat{c}_{2 , \mu}}{\underline{\alpha} }   \intO        (\eps + w_\eps )^{\gamma+2\mu -p   +1 }  dx \ dt  \,,
 \end{multline}
where 
\begin{equation}\label{ci} 
 \hat{c}_{1 , \mu} =  c_{1, 2 	\mu - \frac{p}2 }   = \frac{ 2}{ c_0} \Big(2 	\mu - \frac{p-2}2   \Big) \Big(2 	\mu+ \frac{2N-p}2      \Big) \quad 
\mbox{ and }
 \quad \hat{c}_{2 , \ \mu} =  c_{2, 2 	\mu - \frac{p}2 }   = 
 \frac{N}{  c_0}  \frac{\gamma^2  (2\mu -\frac{p}2 +1) }{(\gamma +4 \mu - p +2 )^2}\,.
\end{equation} 
We write
\[
\intO (\eps+w_\eps)^{ 2 	\mu -2}    | D w_\eps  |^2\ dx dt=\frac{1}{\mu^2}\intO |D(\eps+w_\eps)^\mu|^2\ dx dt,
\]
use that
\[
{c_0}  \big(2 	\mu - \frac{p}2+1\big)  \intO (\eps + w_\eps )^{ 2 	\mu - \frac{p}2 } 	\ \alpha_\eps (w_\eps) \ |D^2u_\eps|^2  \ dx dt\geq0
\]
and get from \eqref{main51} the inequality  
 \begin{equation*}
\begin{array}{c}\dys 
      \int_\Omega  (\eps+w_\eps(t))^{2 	\mu - \frac{p}2+1}\,dx
+ 
\frac{\underline{\alpha} (2 	\mu - \frac{p}2)  \big(2 	\mu - \frac{p}2+1\big)   { c_0} }{ 2  \mu^2}
\intO |D(\eps+w_\eps)^\mu|^2\ dx\  dt \ 
\\ \dys 
\leq 
 \int_\Omega  (\eps+w_\eps(0))^{2 	\mu - \frac{p}2+1}\,dx
+  
\frac{ \hat{c}_{1 , \mu}}{\underline{\alpha} }  \intO |f (x,t ) |^2     | (\eps + w_\eps )^{2\mu -p   +1 }   dx \ dt  
+  
\frac{\hat{c}_{2 , \mu}}{\underline{\alpha} }   \intO        (\eps + w_\eps )^{\gamma+2\mu -p   +1 }  dx \ dt\,.\end{array}
 \end{equation*}
Thanks to H\"older inequality with exponents $m/2$ and $m/(m-2)$ we have that   
 \begin{equation*} 
\begin{array}{c}\dys 
     \int_\Omega  [(\eps+w_\eps(t))^\mu]^{2 - \frac{p-2}{2\mu}}\,dx
+ 
\frac{\underline{\alpha} (2 	\mu - \frac{p}2)  \big(2 	\mu - \frac{p}2+1\big)   { c_0} }{ 2  \mu^2}
\intO |D(\eps+w_\eps)^\mu|^2\ dx dt \ 
\leq 
   \int_\Omega  (\eps+w_\eps(0))^{2 	\mu - \frac{p}2+1}\,dx
\\ \dys 
+  \frac{ \hat{c}_{1, \mu}}{\underline{\alpha} }    \| f_\eps (x,t) \|^2_{L^m (Q_T)}   \| (\eps+w_\eps)^\mu \|^{2- \frac{(p-1)}{\mu}}_{L^{\frac{2\mu-(p-1)}{\mu}\frac{m}{m-2}}(Q_T) }   \ 
+  \frac{ \hat{c}_{2, \mu}}{\underline{\alpha} } \intO        (\eps + w_\eps )^{\gamma+2\mu -p   +1 }  dx \ dt \,, 
 \end{array}
 \end{equation*}
 We are now in position to use the embedding in \eqref{GaNir}: we apply it with $v=(\eps+w_\eps)^\mu$, $r=2  - \frac{p-2}{2\mu}>1$ 
 and $s=2\frac{N+r}{N}=2\frac{N+2}N  - \frac{p-2}{ \mu N}$. This leads to 
\begin{equation*} 
\begin{array}{c}\dys 
\frac12      \int_\Omega  (\eps+w_\eps(t))^{2 	\mu - \frac{p}2+1}\,dx 
+
c_1 \bigg( \intO (\eps+w_\eps)^{\mu s} \ dx \ dt \ 
\bigg)^{\frac{N}{N+2}}
\leq c \bigg( 1+ 
\int_\Omega  (\eps+w_\eps(0))^{2 \mu - \frac{p}2+1}\,dx
\\ \dys 
+     \| f_\eps (x,t) \|^2_{L^m (Q_T)}   \|  \eps+w_\eps  \|^{2\mu -  {(p-1)}}_{L^{\frac{2\mu-(p-1)}{\mu}\frac{m}{m-2}}(Q_T) }  \ 
+    \|   \eps+ w_\eps\|^{2 \mu + \gamma - (p- 1) }_{L^{2 \mu + \gamma - (p- 1)  } (Q_T) }  \bigg)  \,,
 \end{array}
 \end{equation*}
for a constant $c_1$ such that 
$$
0<c_1 = c_1 (p,\alpha,\mu) \leq  \min\bigg\{ \frac{\underline{\alpha} (2 	\mu - \frac{p}2)  \big(2 	\mu - \frac{p}2+1\big)   { c_0} }{ 2  \mu^2}, \frac 12 \bigg\}\,.
$$
By passing to the supremum with respect to the  time variable on the left-hand side we conclude  
 \begin{equation}\label{main556}
\begin{array}{c}\dys 
    \|   \eps+ w_\eps\|^{ 2\mu -\frac{p- 2}{ 2   }}_{L^{\infty} (0,T; L^{2\mu -\frac{p- 2}{ 2   }}(\Omega))}  
+
    \| \eps+ w_\eps \|^{\frac{Ns \mu }{N+2}}_{L^{\mu s} (Q_T)}
\\ \dys 
\leq 
c \bigg(
  1+  \| \eps+ w_\eps (0)\|^{ 2\mu-\frac{p- 2}{ 2   }}_{L^{ 2\mu-\frac{p- 2}{ 2   }} (\Omega)}
+      \| f_\eps (x,t) \|^2_{L^m (Q_T)}   \|  \eps+ w_\eps\|^{2\mu- ( p-1)}_{L^{  \frac{m[2\mu- ( p-1)]}{m-2}} (Q_T) }   \ 
+   \|          \eps+ w_\eps \|^{2 \mu + \gamma - (p- 1) }_{L^{2 \mu + \gamma - (p- 1)  } (Q_T) }   \bigg) \,.
 \end{array}
 \end{equation}
In view of the choice of $\mu$, we have that 
\[
\frac{m[2\mu- ( p-1)]}{m-2}= \mu s\,, 
\]
and  $2\mu-(p-1)>0$ since $p>1$. Moreover 
 $$
2\mu- ( p-1)<  \frac{Ns \mu }{N+2}  = 2\mu - \frac{p-2}{N+2}
 $$
and consequently  \eqref{main556} can be rephrased as 
  \begin{equation*} 
\begin{array}{c}\dys 
    \|  \eps+ w_\eps \|^{ 2\mu -\frac{p- 2}{ 2   }}_{L^{\infty} (0,T; L^{2\mu -\frac{p- 2}{ 2   }} (\Omega))}  
+
    \| \eps+ w_\eps \|^{\frac{Ns \mu }{N+2}}_{L^{\mu s} (Q_T)}
\\ \dys 
\leq 
c \bigg(
  1+  \| \eps+ w_\eps (0)\|^{ 2\mu-\frac{p- 2}{ 2   }}_{L^{ 2\mu-\frac{p- 2}{ 2   }} (\Omega)}
+      \| f_\eps (x,t) \|^2_{L^m (Q_T)}   \|  \eps+ w_\eps\|^{2\mu- ( p-1)}_{L^{  \mu s } (Q_T) }   \ 
+   \|          \eps+ w_\eps\|^{2 \mu + \gamma - (p- 1) }_{L^{2 \mu + \gamma - (p- 1)  } (Q_T) }   \bigg).
 \end{array}
 \end{equation*}
We now apply   Young inequality with the duality pair 
\[
\nu= \frac{\frac{Ns \mu }{N+2}}{2\mu -(p-1)}= \frac{2\mu -\frac{p-2}{N+2}}{2\mu -(p-1)} = \frac{Nm}{(N+2)(m-2)},
\]
\[
  \nu'=\frac{\frac{Ns \mu }{N+2}}{p-1- \frac{p-2}{N+2}}=\frac{2\mu -\frac{p-2}{N+2}}{ p-1- \frac{p-2}{N+2}}  = \frac12 \frac{Nm}{N+2-m},
  \]
 so that 
   \begin{equation*} 
\begin{array}{c}\dys 
    \|  \eps + w_\eps\|^{ 2\mu -\frac{p- 2}{ 2   }}_{L^{\infty} (0,T; L^{2\mu -\frac{p- 2}{ 2   }} (\Omega))}  
+
    \|\eps +w_\eps \|^{\frac{Ns \mu }{N+2}}_{L^{\mu s} (Q_T)}
\\ \dys 
\leq 
c \bigg(
  1+  \|\eps +w_\eps (0)\|^{ 2\mu-\frac{p- 2}{ 2   }}_{L^{ 2\mu-\frac{p- 2}{ 2   }} (\Omega)}
+      \| f_\eps (x,t) \|_{L^m (Q_T)}^{2\nu'}       \ 
+   \|   \eps +      w_\eps\|^{2 \mu + \gamma - (p- 1) }_{L^{2 \mu + \gamma - (p- 1)  } (Q_T) }   \bigg) \,,
 \end{array}
 \end{equation*}
and 
 \[
 2\mu-\frac{p-2}{2}=\frac{N[m(p-1)-(p-2)]}{2(N+2-m)}.
 \]

 \medskip 
 
Suppose now that  $0<\gamma <   \frac{p}2$, then 
$$
2 \mu + \gamma -    (p- 1) < 2 \mu -  \frac{   {p}-2  }{2 }
$$
so that by the weighted Young inequality we have
$$
         c\intO ( \eps+ w_\eps)^{2 \mu + \gamma - (p- 1)  }
         \leq \frac{1}{2}\intO ( \eps+ w_\eps)^{2 \mu -  \frac{   {p}-2  }{2 }}  + c'.
$$
Then
   \begin{equation}\label{main5c1}
\begin{array}{c}\dys 
    \|  \eps+ w_\eps\|^{ 2\mu -\frac{p- 2}{ 2   }}_{L^{\infty} (0,T; L^{2\mu -\frac{p- 2}{ 2   }} (\Omega))}  
+
    \| \eps+ w_\eps \|^{\frac{Ns \mu }{N+2}}_{L^{\mu s} (Q_T)}
\leq 
c \bigg(
  1+  \|\eps+ w_\eps (0)\|^{ 2\mu-\frac{p- 2}{ 2   }}_{L^{ 2\mu-\frac{p- 2}{ 2   }} (\Omega)}
+      \| f_\eps (x,t) \|_{L^m (Q_T)}^{2\frac{\frac{Ns \mu }{N+2}}{ p-1- \frac{p-2}{N+2}}}       \ 
   \bigg)
 \end{array}
 \end{equation}
 holds true.

On the other hand, if $0<\gamma  <p-1- \frac{p-2}{N+2} $, 
then 
$$
2 \mu + \gamma -    (p- 1) < \frac{N s \mu}{N+2} = 2 \mu - \frac{p-2}{N+2} < s \mu\,,
$$
so that  
\[   \intO     (\eps+ w_\eps)^{2 \mu + \gamma - (p- 1)}  \ dx \ dt 
\leq \frac1{2c}     \| \eps+ w_\eps \|^{\frac{N s \mu}{N+2} }_{L^{ \mu s    } (Q_T) }
+ c' \,,
\]
and thus \eqref{main5c1} holds true,  so that  \eqref{e1}    follows. 
As far as \eqref{e2} is concerned, it is directly deduced by \eqref{main5c1}. 

\medskip 

Finally,  \eqref{so} can be deduced  from \eqref{main51} repeating the above proof without dropping the second order term. 
\end{proof}

\bigskip

\begin{proof}[Proof of Theorem \ref{mainintro2}]
The proof follows from Theorem \ref{mainintro1} by letting $m\nearrow N+2$.
\end{proof}

 \begin{rem}
We observe that Theorem \ref{mainintro2} holds up to the end-point case  $\gamma =  \ell = \max\left\{\frac{p}2,p-1-\frac{p-2}{N+2} \right\}$.
Indeed, the main modification concerns the right-hand side of \eqref{main51}: since it has the same growth of the terms appearing on the left-hand side, by tracking back all the constants it would follow (see \eqref{ci}) that the constant $\hat{C_2}$ vanishes as $\mu $ diverges. This allows to reabsorb it on the left-hand side of the inequality. 
\end{rem}

\begin{rem}
The above proof provides also the following integral estimate
\[
\iint_{Q_T}|Du|^{2(2\mu-1)}|D^2u|^2\,dxdt\lesssim C(\|f\|_m).
\] 
These kind of estimates have been recently studied by different methods in \cite{Montoro} for stationary equations.
\end{rem}

\begin{proof}[Proof of Theorem \ref{mainintro3}]
We start observing that, by Theorem \ref{mainintro2}, we have that   $\widetilde{f}(x,t):= (|Du_\eps|^2+\eps)^\gamma+f(x,t)$ belongs to $L^m$, $m>N+2$. 
We can thus consider an evolutionary $p$-Laplacian type equation with right-hand side $\widetilde{f}(x,t)$ having summability above the critical dimension $N+2$. We take
\[
\beta=2\mu-\frac{p}{2}>0
\]
and slightly modify the proof of Lemma \ref{bochner} by using the test function $(\eps+w_\eps-k)_+^\beta$. We emphasize that $\mu$ only needs to satisfy $\mu>\frac{p}{4}$ and does not play any role in the course of the proof as in Theorem \ref{mainintro1}. Owing to Remark \ref{principe} we conclude
\begin{multline*} 
\frac{1}{2\mu-\frac{p}{2}+1}   \int_\Omega  (\eps+w_\eps (t)  - k)_+^{2\mu-\frac{p}{2}+1}\,dx 
 \\
+ 
 c_0 (2\mu-\frac{p}{2})   \iint_{Q_T}    \alpha_\eps(w_\eps)(\eps+w_\eps - k)_+^{2\mu-\frac{p}{2}-1}
    | D w_\eps  |^2 \,dxdt
+2 c_0 \iint_{Q_T} |D^2u_\eps|^2 \alpha_\eps (w_\eps)  (\eps+w_\eps - k)_+^{ 2\mu-\frac{p}{2} }  \,dxdt
\\
\leq   \frac{1}{2\mu-\frac{p}{2}+1}   \int_\Omega  (\eps+w_\eps (0)- k)_+^{2\mu-\frac{p}{2}+1}\,dx
+ 2\iint_{Q_T} D u_\eps \cdot D \widetilde{f}_\eps  (x,t  ) \ (\eps+w_\eps - k)_+^{2\mu-\frac{p}{2}}\,dxdt.
    \end{multline*}
Using that $p\geq2$,  and taking $k\geq \|w_\eps(0)\|_{L^\infty(\Omega)}+\eps$ to neglect the term involving the initial datum, we get by integrating by parts the term involving $\widetilde{f}$
\begin{multline*}
\frac{1}{2\mu-\frac{p}{2}+1}   \int_\Omega  (\eps+w_\eps (t)  - k)_+^{2\mu-\frac{p}{2}+1}\,dx 
+ 
\frac12  c_0 (2\mu-\frac{p}{2})   \iint_{Q_T}   (\eps+w_\eps - k)_+^{2\mu -2}
    | D w_\eps  |^2 \,dxdt
\\
\leq    
 2 {N}\iint_{Q_T}|\tilde{f}_\eps  (x,t  )|^2     (\eps+w_\eps - k)_+^{2\mu-(p-1)}\,dxdt+\frac{1}{2 c_0 (2\mu-\frac{p}{2})}\iint_{Q_T}\underbrace{|\tilde{f}_\eps  (x,t  )|^2 \  |D u_\eps |^2 }_{\tilde{g}^2_\eps (x,t, w_\eps)}  \frac{ (\eps+w_\eps - k)_+^{2\mu-\frac{p}{2}-1}}{\alpha_\eps (w_\eps)}\,dxdt,
    \end{multline*}
that means 
\begin{multline*}\label{main3.4}
\frac{1}{2\mu-\frac{p}{2}+1}   \int_\Omega  [(\eps+w_\eps (t)  - k)_+^\mu]^{ 2-\frac{p-2}{2\mu}}\,dx 
+ 
\frac12  \frac{c_0 (2\mu-\frac{p}{2}) }{\mu^2}  \iint_{Q_T}   [D(\eps+w_\eps - k)_+^\mu ]^2 \,dxdt
\\
\leq    
2N\iint_{Q_T}|\tilde{f}_\eps  (x,t  )|^2     [(\eps+w_\eps - k)_+^\mu]^{2-\frac{p-1}{\mu}}\,dxdt+\frac{1}{2 c_0 (2\mu-\frac{p}{2})}\iint_{Q_T}|\tilde{g}_\eps (x,t, w_\eps)|^2  [(\eps+w_\eps - k)_+^{\mu}]^{2-\frac{p}{\mu}}\,dxdt.
    \end{multline*}
We denote by $A_{k,\eps}:=\{(x,t)\in Q_T: \ \eps + w_\eps(x,t)\geq k\}$. We take care only about the first integral of the right-hand side, the second one having a slower growth with respect to $w_\eps$ (note that also $\tilde g\in L^m$, $m>N+2$, since $Du\in L^r$, for any $r<\infty$). Applying the Gagliardo-Nirenberg inequality on the left-hand side, and the H\"older inequality on the right-hand side, we end up with
$$    \|(\eps+w_\eps (t)  - k)_+^{\mu} \|_{L^{s} (A_{k,\eps})}^{\frac{Ns}{N+2}}  
\leq    
c_1\|\tilde{h}_\eps  (x,t  )\|_{L^{2\eta} (A_{k,\eps})}^2  \| (\eps+w_\eps - k)_+^{\mu}\|_{L^{s} (A_{k,\eps})}^{2-\frac{p-1}{\mu}},     $$
where \[
s= 2 \frac{N+2}{N}- \frac{p-2}{\mu N}\,, \quad 
\eta= \frac{s}{s-2 +\frac{p-1}{\mu}} = \left(\frac{s}{2-\frac{p-1}{\mu}}\right)' 
\qquad \mbox{ and  } \quad |\tilde{h}_\eps  (x,t  )|^2= |\tilde{f}_\eps (x,t, w_\eps)|^2  + |\tilde{g}_\eps (x,t, w_\eps)|^2 \,.
\]
A further application of the H\"older inequality leads to
$$    \|(\eps+w_\eps (t)  - k)_+^{\mu} \|_{L^{s} (A_{k,\eps})}^{\frac{Ns}{N+2}-2+\frac{p-1}{\mu} }  
\leq    
c_2\|\tilde{f}_\eps  (x,t  )\|_{L^{m} (A_{k,\eps})}^2  |A_k|^{\frac{1}{\eta}-\frac2{m}}.
    $$
As a consequence, for $h>k$ we have $A_{h,\eps}\subset A_{k,\eps}$ (thus the function $\sigma\longmapsto |A_{\sigma,\eps}|$ is nonincreasing) and
$$ \|(\eps+w_\eps (t)  - k)_+^{\mu} \|_{L^{s} (A_{k,\eps})}
\geq 
(h-k)^\mu |A_{h,\eps}|^{\frac1s}\,,$$
that  implies the following inequality
$$  
[(h-k)^\mu |A_{h,\eps}|^{\frac1s}]^{\frac{Ns}{N+2}-2+\frac{p-1}{\mu} }  
\leq    
c_2\|\tilde{h}_\eps  (x,t  )\|_{L^{m} (A_{k,\eps})}^2  |A_{k,\eps}|^{\frac{1}{\eta}-\frac2{m}}\,,
    $$
and hence
$$  
 |A_{h,\eps}| \leq    
c_3\|\tilde{h}_\eps  (x,t  )\|_{L^{m} (A_{k,\eps})}^2  
\frac{ |A_{k,\eps}|^{s\frac{\frac{1}{\eta}-\frac2{m}}{{\frac{Ns}{N+2}-2+\frac{p-1}{\mu} }  
} }}
{ (h-k)^{\mu ({\frac{Ns}{N+2}-2+\frac{p-1}{\mu} )}  }
}.
  $$
We are now in position to apply Lemma 5.1 in 
\cite{Stampacchia}, to the measure of the superlevel sets, whenever
 $$
 \mu \left(\frac{Ns}{N+2}-2+\frac{p-1}{\mu} \right)   >0
 $$
and 
$$
s\frac{\frac{1}{\eta}-\frac2{m}}{{\frac{Ns}{N+2}-2+\frac{p-1}{\mu}}} >1
\qquad 
\iff
\qquad 
{\frac{1}{\eta}-\frac2{m}} 
> \frac{N }{N+2}-\frac2{s}+\frac{p-1}{s\mu}.
$$
Recalling the choices of $s$ and $\eta$ 
the above inequality turns into 
$$
\frac{2 }{N+2} =1 - \frac{N }{N+2} 
> \frac2{m}.
$$
Therefore we can conclude that for  $m>N+2$ there exists $k_0$ such that $|A_{k,\eps}|\equiv 0$ for any $k\geq k_0$, which implies $\esssup_{Q_T} w_\eps\leq k_0$.
\end{proof}

 \begin{rem}\label{p<2}
The proof of Theorem \ref{mainintro3} combining the Bernstein method and the Stampacchia approach is new, although it is restricted to $p\geq2$. It appeared (in a local form) using a De Giorgi-Moser iteration in several papers \cite{DiBFriedman,Choe} and it covers the whole range $p>1$, see also Lemma VIII.4.1 and VIII.4.2 of \cite{Dib}. The authors in \cite{DiBFriedman} pointed out the validity of the $L^\infty$ bound when $m>Np'$ (also under Neumann boundary conditions, without convexity assumptions on the domain), while \cite{Dib} considered $m>N+2$. Both papers treated $\gamma\leq p-1$, but all the results were stated without proof, including those for Neumann boundary conditions, cf. Remark 7.4 and Theorem 7.4 in \cite{DiBFriedman}.  The paper \cite{Misawa} gave the proof of an interior $L^\infty$ bound using similar methods, but under a different growth condition for $H$ depending on $p$ and $N$. The description of the parabolic sublinear region defined in the introduction of this manuscript was explained in detail in e.g. \cite{Magliocca}.
\end{rem}


\begin{rem}[on the convexity assumption] 
Let us stress that the hypothesis on the convexity of the domain can be removed in some cases. As it can observed in the proof of the main results, such assumption  is crucial since, together with the Neumann boundary condition, it guarantees   that $\partial_\nu w_\eps <0$ at $\partial \Omega$.  
To avoid such a requirement, one might try to write the equivalent of inequality \eqref{main33} for $z_\eps = w_\eps e^{\lambda d (x)}$
where $d(x)=\mathrm{dist}(x, \partial \Omega)$ and $\lambda$ is a positive constant depending on the main curvature of $\partial \Omega$.   Indeed in this case we have $\partial_\nu z_\eps <0$ at $\partial \Omega$ (see for instance \cite{LPSiam}, Lemma 2.4). 
These changes lead to the following inequality 
\begin{multline*} 
   \int_\Omega  (\eps+z_\eps(t))^{\beta+1}\,dx 
+     c_0 (\beta+1)\intO (\eps + z_\eps )^{ \beta }  \alpha_\eps (w_\eps) |D^2u_\eps|^2 dx\ dt 
+   \frac12 { c_0}  \beta(\beta +1) \intO  
(\eps+z_\eps)^{ \beta -1} \alpha_\eps (w_\eps)    | D z_\eps  |^2 dx\ dt
 \\
\leq 
  \int_\Omega  (\eps+z_\eps(0))^{\beta+1}\,dx
+ 2 (\beta+1) \intO D u_\eps \cdot D H_\eps  (x,t,Du_\eps ) \  (\eps + w_\eps )^\beta dx\ dt 
+
c  \intO      \alpha_\eps (w_\eps)  w    (\eps+z_\eps)^{ \beta  }   \ dx \ dt \,,
 \end{multline*}
where the novelty  with respect  \eqref{main33} relies on the last integral. 
Thus it is possible to   achieve the results of Theorem \ref{mainintro1} and \ref{mainintro2} only if we are able to deal with the last term, that requires in turn $1<p\leq 2$. 
\end{rem}

\begin{rem}
The papers \cite{Alikakos1,Alikakos2,Engler} addressed the preservation of Sobolev regularity as in Theorem \ref{conserv} of a similar quasilinear parabolic model with $f=0$, using an integral Bernstein argument. In particular, concerning the geometry of the domain, the paper \cite{Alikakos1} pointed out the significance of the convexity of the ambient space by contructing an explicit domain for which the gradient of solutions to the (model) heat equation with bounded initial gradient can be made arbitrarily large. Note that this is not in contrast with the previous remark and our statement of Theorem \ref{conserv}: indeed, in the non-convex case, the constant of the Sobolev a priori estimate depends on the domain, see e.g. the statement of Corollary 2.4 of \cite{PorrCCM} for the case of elliptic equations with zero-th order terms.
\end{rem}

We conclude the paper with the elliptic case and some comments to compare the estimates of Theorem \ref{mainapp} with the known results:
\begin{proof}[Sketch of the proof of Theorem \ref{mainapp}]
The proof goes along the same steps of the parabolic results. We summarize here the main changes. First, one derives the following integral inequality:
\begin{multline*}	
   \int_\Omega   2 \lambda w_\eps   (\eps+w_\eps)^{\beta }\,dx 
+   2 c_0 (\beta+1)\int_\Omega (\eps + w_\eps )^{ \beta }  \alpha_\eps (w_\eps) |D^2u_\eps|^2 dx\  
\\
+    { c_0}  \beta(\beta +1) \int_\Omega  
(\eps+w_\eps)^{ \beta -1} \alpha_\eps (w_\eps)    | D w_\eps  |^2 dx\ 
\leq 
 2 (\beta+1) \int_\Omega D u_\eps \cdot D H_\eps  (x,t,Du_\eps ) \  (\eps + w_\eps )^\beta dx\  \,.
 \end{multline*}
 This follows from Proposition 6.1 in \cite{LPcpde} and testing the $p$-Bochner identity against $(\eps+w_\eps)^\beta$. The same steps of the parabolic proof lead to the following inequality
 \begin{multline}\label{main3ell}
   \int_\Omega  2\lambda w_\eps    (\eps+w_\eps)^{\beta}\,dx 
+  c_0 (\beta+1)  \int_\Omega (\eps + w_\eps )^{ \beta }  \alpha_\eps (w_\eps) |D^2u_\eps|^2 dx\  
+    { c_0} \frac{ \beta(\beta +1)}2 \int_\Omega  
(\eps+w_\eps)^{ \beta -1} \alpha_\eps (w_\eps)    | D w_\eps  |^2 dx\ 
   \\
\leq 
 c_{1,\beta}   \int_\Omega |f_\eps (x,t) |^2  \frac{(\eps + w_\eps )^\beta }{ \alpha_\eps (w_\eps) } dx\  
+  c_{2,\beta}       \int_\Omega        \  \frac{(\eps + w_\eps )^{ {\gamma}   +\beta } }{\alpha_\eps (w_\eps)} dx\   
  \,,
  \end{multline}
where 
$$
 { c_0}= \min\left\{ 1,     i_\alpha+1   \right\}\,, \qquad   c_{1, \beta}   = 2 \frac{ (\beta+1) \big ( {N}  + \beta  \big)}{ c_0} 
  \qquad \mbox{ and } \qquad
  c_{2, \beta}  = \frac{N}{ c_0}  \frac{\gamma^2  (\beta+1) }{(\gamma +2\beta)^2 }  \,.
$$
From this point the proof is similar to the previous results, the only difference being the application of the (stationary) Sobolev inequality. We set in \eqref{main3ell}
$$
\beta= 2 	\mu - \frac{p}2 \qquad  \text{with } 
\qquad \mu= \frac{(N-2)(p-1)m}{4 (N-m)}
\,, 
$$
and  $m>m_{p,ell}$, this choice ensures that
\[
\beta>0\quad  \left(\text{i.e. }\mu>\frac{p}{4}\right)\,. 
\]
Thus, after applying Sobolev and H\"older inequalities, we deduce,
 \begin{equation*}
\begin{array}{c}\dys 
    \| \eps+ w_\eps \|^{2 \mu  }_{L^{\mu 2^*} ( \Omega)}
\leq 
c \bigg(
  1 
+      \| f_\eps (x,t) \|^2_{L^m ( \Omega)}   \|  \eps+ w_\eps\|^{2\mu- ( p-1)}_{L^{  \frac{m[2\mu- ( p-1)]}{m-2}} ( \Omega) }   \ 
+   \|          \eps+ w_\eps \|^{2 \mu + \gamma - (p- 1) }_{L^{2^*  \mu  } ( \Omega) }   +    \|   \eps+ w_\eps\|^{ \mu   }_{L^{  \mu } ( \Omega) } 
\bigg) \,.
 \end{array}
 \end{equation*}
Recalling    that $\frac{m[2\mu- ( p-1)]}{m-2}= \mu 2^* $,   
 \begin{equation*} 
\begin{array}{c}\dys 
    \int_\Omega  2 \lambda w_\eps  (\eps+w_\eps)^{2 	\mu - \frac{p}2 }\,dx 
+
    \| \eps+ w_\eps \|^{2 \mu }_{L^{\mu 2^*} ( \Omega)}
    +  \frac12 c_0 (\beta+1)  \int_\Omega (\eps + w_\eps )^{ \beta }  \alpha_\eps (w_\eps) |D^2u_\eps|^2 dx\  
\\[1.5 ex] \dys 
\leq 
c \bigg(
  1+ 
      \| f_\eps (x,t) \|^{\frac{4 \mu}{p-1}}_{L^m ( \Omega)}  
      +   \|          \eps+ w_\eps\|^{2 \mu + \gamma - (p- 1) }_{L^{2^* \mu  } ( \Omega) }  
      +    \|   \eps+ w_\eps\|^{ \mu   }_{L^{  \mu } ( \Omega) }  \bigg).
 \end{array}
 \end{equation*}
The proof concludes since the last two terms can be absorbed in the left hand side since $0\leq \gamma < p-1$. 
Moreover \eqref{soell1} follows by plugging \eqref{ell1} into \eqref{main3ell}. Furthermore,  \eqref{ell2}--\eqref{soell2} can be deduced as a limit case from \eqref{ell1}--\eqref{soell1}. \\
Finally, in order to deduce the $L^\infty$ bound for $p\geq 2$, one has to choose $(w_\eps -k)_+^\mu$, $\mu >0$ as test function, and follow the proof of Theorem \ref{mainintro3}, with the use of Sobolev inequality instead of Gagliardo Nirenebrg one. 

\end{proof}
\begin{rem}
Estimate \eqref{soell1} agrees with Theorem 2.3 in \cite{CianchiMazyaARMA2} in the limit $m\to2$ and $\gamma=0$, and extends it to general $f\in L^m(\Omega)$ and lower order sublinear terms. Estimates \eqref{e1}-\eqref{e2} recover with a different proof those in Theorem 4.3 of \cite{CianchiMazyaJEMS}.
\end{rem}

\section{Existence results}\label{exi}

In this section we obtain,  as a byproduct of the results stated in Section \ref{sec;assres}, the existence of a solution to 
\begin{equation}\label{parap}
\begin{cases}
\partial_t u-\mathrm{div}(\alpha(|Du|^2) Du) =|Du|^\gamma  + f (x,t)&\quad\text{ on }Q_T,\\
\partial_\nu u=0&\quad\text{ on } \partial\Omega\times(0,T),\\
u(x,0)=u_0(x)&\quad\text{ on } \Omega\ , 
\end{cases}
\end{equation}
under the hypotheses of Theorems \ref{mainintro1}, \ref{mainintro1} or \ref{mainintro3}.

\begin{cor}
 Let $p>1$, $\Omega $ be convex,  
$ 0\leq \gamma   <  \ell$
and 
$$
f  \in L^{m} (Q_T)\, , \quad \mbox{  with } \quad m>m_p \,, \quad \mbox{ where  } \quad m_p := \max\left\{ 2, \frac{Np+4}{N(p-1)+2} \right\}\,.
$$ 
If $u_0 \in W^{1, \rho} (\Omega)$,  with $ \rho = N\frac{(p-1)m-(p-2)}{N+2-m}$, 
then there exists a weak solution to \eqref{parap}. 
Moreover 
\begin{itemize} 
\item[a)] if $m_p<m<N+2$, then estimates \eqref{e1}--\eqref{so} hold true; 
\item[b)] if $ m=N+2$, $u_0 \in W^{1,\rho} (\Omega)$ for any $\rho\geq 1$, then estimates \eqref{quasilinfty}--\eqref{so2} hold true; 
\item[c)] if $ m>N+2$, $u_0 \in W^{1,\infty} (\Omega)$ and $p\geq 2$,  then $|D u|$ is bounded.  
\end{itemize} 
 \end{cor} 

Here we sketch   the proof of the above result: for a more complete and detailed 
 proof  one can follow \cite{PorzioDCDS}.

\proof[Sketch of the proof] 

Let us consider the sequence $u_\eps$ of solutions   to \eqref{approxp}  with   $ u_\eps \in L^p (0,T; W^{1,p} (\Omega)) \cap C^0 ([0,T]; L^2 (\Omega))$ such that $\partial_t u_\eps \in L^{p'} (0,T; W^{-1,p'} (\Omega))+L^1 (Q_T)$, whose existence follows by  classical results on quasilinear parabolic equations, see e.g. Theorem 13.24 in \cite{Lieberman}. More precisely, $u_\eps$ satisfies  
\begin{multline*}
\iint_{Q_T}  \partial_t  u_\eps (x,t) \psi (x,t) dx +
\iint_{Q_T} \alpha (\eps+|Du_\eps|^2) D u_\eps \cdot D \psi (x,t) dx dt 
+ \iint_{Q_T} |D u_\eps|^\gamma  \psi (x,t) dx dt 
= 
\iint_{Q_T} f (x,t) \psi (x,t)  dx dt \,,
\end{multline*}
for any $ \psi \in L^p (0,T; W^{1,p} (\Omega)) \cap L^\infty (Q_T)$.
 
We want to show that we can pass to the limit in its weak formulation
 and get a solution to \eqref{parap}.  
  Let us first recall that by a weak solution $u$ to problem \eqref{parap} we mean a function in
$C([0,T]; L^1 (\Omega)) $ such that $|Du|^{p-1} \in L^\mu (Q_T) $ with $\mu = \max\{\frac{\gamma}{p-1},1\}$  that satisfies 
\begin{multline*}
\int_{\Omega} u (x,t) \varphi (x,t) dx +
\iint_{Q_T} \alpha (|Du|^2) D u \cdot D \varphi (x,t) dx dt 
+ \iint_{Q_T} |D u|^\gamma  \varphi (x,t) dx dt  
\\
= \iint_{Q_T} f (x,t) \varphi (x,t)  dx dt 
+ 
\iint_{Q_T}u  (x,t) \partial_t\varphi (x,t)  dx dt 
+
\int_{\Omega} u (x,0) \varphi (x,0)  dx dt \,,
\end{multline*}
for any $  \varphi \in C^1 (Q_T)$. 
  
  \smallskip 
  
{\bf  a) }
We first observe that   Theorem \ref{mainintro1} guarantees estimates \eqref{e1}--\eqref{so}. Moreover, choosing   
$\psi_\eps (x,t)= \frac1\delta T_\delta (u_\eps)$, with $\delta>0$ we deduce, after sending $\delta$ to $0$,  a $L^{\infty} (0,T;L^1 (\Omega))$ bound on $u_\eps$. 

 In particular there exists a function $u\in L^q (0,T; W^{1,q} (\Omega))\cap L^{\infty} (0,T; W^{1,\rho} (\Omega))$ such that, up to subsequences (not relabeled), 
  $u_\eps $ converges to $u$, as $\eps$ vanishes, weakly in  $L^q (0,T; W^{1,q} (\Omega))$, strongly in $L^q (Q_T)$ and a.e. in $Q_T$.  
Furthermore we have that 
 $ \partial_t u_\eps-\Delta_p u_\eps $ is bounded in $L^1 (Q_T)$ so that by Theorem 3.3 in \cite{BDGOjfa} one obtains the a.e. convergence of gradients. Thus, thanks to Vitali's Theorem we have compactness of $|D u_\eps|^{p-1}$ in $L^{\frac{\gamma}{p-1}} (Q_T)\cap L^1 (Q_T)$. 
  
This, jointly with  \eqref{e1} and \eqref{e2}, allow us to pass to the limit both in the principal part of the operator and in the gradient term, by using  Vitali's theorem.

 \smallskip 

 {\bf  b)} and   {\bf  c)} follow in the same way, applying Theorem    \ref{mainintro2} and \ref{mainintro3} respectively instead of Theorem    \ref{mainintro1}. 
  \qed

 \medskip

A similar result holds for the stationary problem: consider now the elliptic problem 
\begin{equation}\label{ppell1}
\begin{cases} 
\lambda u- \text{div}  \big( \alpha (|Du|^2)  D u\big)  =|Du|^\gamma + f(x) \qquad & \mbox{in }  \Omega  \,,\\
\partial_\nu u = 0  & \mbox{on } \partial \Omega.  \, \end{cases}
\end{equation}
Then we have the following existence results. 
\begin{cor} 
Let $p>1$, $\Omega $ be convex,  
$ 0\leq \gamma   <  p-1
$
and 
$$
f  \in L^{m} ( \Omega)\, , \quad \mbox{  with } \quad m>m_p \,, \quad \mbox{ where  } \quad m_{p,\mathrm{ell}} := \max\left\{ 2, \frac{Np}{N(p-1)-(p-2)} \right\}\,.
$$ 
Then there exists a weak solution to \eqref{ppell1} such that 

\begin{itemize} 
\item if $m_{p,\mathrm{ell}}<m<N$, then \eqref{ell1} and  \eqref{soell1} hold true; 
\item if $m=N$, then  \eqref{ell2} and  \eqref{soell2} hold true; 
\item if $m>N $ and  $p\geq 2$, then   $|Du| $ is bounded.
\end{itemize}
\end{cor}

\proof The existence of a weak solution follows exactly as in 
\cite[Remark 4.8]{CGL}, while the estimates are a consequence of Theorem \ref{mainapp}. \qed

\medskip

\end{document}